
\documentclass[12pt]{amsart}
\usepackage{bm,amsmath,amssymb,eucal}
\usepackage{hyperref}
\input{comment.sty}

\textheight23.5cm
\textwidth16.5cm
\oddsidemargin-0.25cm \evensidemargin-0.25cm
\topmargin-0.75cm

\newtheorem{Th}{Theorem}[section]
\newtheorem{Prop}[Th]{Proposition}
\newtheorem{Lem}[Th]{Lemma}
\newtheorem{Cor}[Th]{Corollary}

\theoremstyle{definition}




\theoremstyle{remark}
\newtheorem{Rem}[Th]{Remark}

\def\BN{\mathbb N}

\def\BR{\mathbb R}

\def\SB{\mathcal B}

\def\SE{\mathcal E}

\def\SH{\mathcal H}

\def\SM{\mathcal M}
\def\SN{\mathcal N}

\def\ep{\varepsilon}
\def\ga{\gamma}

\def\si{\sigma}

\def\sminus{\smallsetminus}

\def\({\left(}
\def\){\right)}

\def\oo{\infty}

\begin{document}

\title[Partitions into thin sets]{Partitions into thin sets  and \\ forgotten theorems of Kunugi and Lusin-Novikov}

\dedicatory{}
\author[Grzegorek and Labuda]{Edward Grzegorek and Iwo Labuda}

\address{E. Grzegorek\endgraf
Institute of Mathematics, University of Gda\'nsk\endgraf
Wita Stwosza 57, 80--308 Gda\'nsk,
Poland}
\email{egrzeg@mat.ug.edu.pl}
\address{I. Labuda\endgraf
Department of Mathematics, The University of Mississippi\endgraf
University, MS 38677, USA}

\email{mmlabuda@olemiss.edu}

\date{\today}

\subjclass{28A05, 54C10, 54E52}

\keywords{Baire category, Baire property, 2nd countable topological space, function continuous apart from a 1st category set, Borel measure, measurable set, outer measure, measurable envelope}

\begin{abstract} Let $f$ be a function from a metric space $Y$ to a separable metric space $X$. If $f$ has the Baire property, then it is continuous apart from a 1st category set. In 1935,  Kuratowski asked whether  the separability requirement could be lifted. A full scale attack on the problem took place in the late seventies and early eighties. What was not known then, and what remains virtually unknown today, is the fact that a first impressive attempt to solve the Kuratowski problem, due to Kinjiro Kunugi and based on a theorem of Lusin and Novikov, took place already in 1936. Lusin's  remarkable 1934  Comptes Rendus note soon forgotten, remained unnoticed to this day. We analyze both papers and bring the results to full light.    \end{abstract}

\maketitle

\section{Introduction}

Kazimierz\,Kuratowski and Pavel\,S.\,Alexandrov were friends,  both born in 1896,  and so in 1976  Kuratowski published, in Russian, an inconspicuous  paper  \cite{Kur76} dedicated to  Alexandrov on his 80th birthday. The paper
contained, in particular, the following

\begin{Prop}\label{th:partition} Let $Y$ be a complete separable metric space equipped with a non-trivial finite measure $\mu$. Let
 $\{A_\ga:\ga\in\Gamma\}$ be a partition of $Y$ into 1st category (resp., $\mu$-zero) sets. If  the Continuum Hypothesis  holds, then there exists $\Delta\subset\Gamma$ such that $\bigcup_{\ga\in\Delta}A_\ga$
does  not have the Baire property (resp.,\,is not $\mu$-measurable).
\end{Prop}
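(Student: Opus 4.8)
The plan is to handle the two cases simultaneously by isolating the abstract structure they share: a $\sigma$-algebra $\SA$ (the sets with the Baire property, resp.\ the $\mu$-measurable sets) together with a $\sigma$-ideal $\SI$ (the 1st category sets, resp.\ the $\mu$-null sets), where $Y\notin\SI$ (because $Y$, being completely metrizable, is a Baire space, resp.\ because $\mu$ is non-trivial). Writing $A_\Delta=\bigcup_{\ga\in\Delta}A_\ga$ for $\Delta\sbs\Gamma$, I argue by contradiction: suppose $A_\Delta\in\SA$ for \emph{every} $\Delta\sbs\Gamma$. Then in the quotient Boolean algebra $B=\SA/\SI$, with quotient map $C\mapsto[C]$, the assignment $\Phi(\Delta)=[A_\Delta]$ is a well-defined map $\SP(\Gamma)\to B$. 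Since the $A_\ga$ are pairwise disjoint and cover $Y$, the map $\Phi$ sends disjoint sets to disjoint elements, satisfies $\Phi(\Gamma)=[Y]=1\neq0$ and $\Phi(\{\ga\})=[A_\ga]=0$, and—crucially, because $\SI$ is a $\sigma$-ideal—turns countable unions into suprema, $\Phi(\bigcup_n\Delta_n)=\bigvee_n\Phi(\Delta_n)$; in particular $\Phi$ kills every countable subset of $\Gamma$.

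Next I reduce the index set to $\om_1$. Under the Continuum Hypothesis $|\Gamma|\leqs|Y|\leqs\mathfrak c=\aleph_1$, while the number of nonempty pieces must be uncountable (otherwise $Y$ would be a countable union of members of $\SI$, hence in $\SI$). Thus exactly $\aleph_1$ pieces are nonempty, and I identify their index set with $\om_1$, noting that $\Phi(\om_1)=[Y]=1$. The heart of the argument is now the Ulam matrix: for each $\eta<\om_1$ fix an injection $f_\eta\colon\eta\to\om$ and set $U_{\alpha,n}=\{\eta<\om_1:\alpha<\eta,\ f_\eta(\alpha)=n\}$. For fixed $n$ the family $\{U_{\alpha,n}\}_\alpha$ is pairwise disjoint, while for fixed $\alpha$ one has $\bigcup_n U_{\alpha,n}=\{\eta:\eta>\alpha\}$, whose complement is countable. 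Applying $\Phi$ gives $\bigvee_n\Phi(U_{\alpha,n})=\Phi(\{\eta>\alpha\})=1$, so for each $\alpha$ there is $n(\alpha)$ with $\Phi(U_{\alpha,n(\alpha)})\neq0$. By the pigeonhole principle some value $n^*$ is chosen for uncountably many $\alpha$, and for these the elements $\Phi(U_{\alpha,n^*})$ form an \emph{uncountable antichain} of nonzero elements in $B$.

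This contradicts the countable chain condition of $B$, which is the one genuinely space-dependent ingredient and the step I expect to need the most care. In the measure case ccc is immediate from finiteness of $\mu$: an uncountable disjoint family of sets of positive measure is impossible. In the category case one uses that a non-meager set $A$ with the Baire property equals $U\,\triangle\,M$ with $U$ open nonempty and $M$ meager, that disjointness of two such sets forces $U\cap U'\sbs M\cup M'$ to be an empty open set (as $Y$ is Baire), and that a \emph{separable} $Y$ admits no uncountable disjoint family of nonempty open sets—this is exactly where separability enters. Verifying ccc in the category case, together with the check that $\Phi$ respects countable suprema (which rests entirely on $\SI$ being a $\sigma$-ideal), are the only delicate points; the Ulam pigeonhole then yields the contradiction uniformly in both settings.
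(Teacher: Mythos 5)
Your proof is correct. The delicate points all check out: the quotient map $\Phi$ into $\SA/\SI$ turns countable unions into suprema exactly because $\SI$ is a $\sigma$-ideal; the Ulam matrix has the stated disjointness and covering properties; and both quotient algebras are ccc (finiteness of $\mu$ in the measure case; second countability together with the Baire category theorem for complete $Y$ in the category case, since genuinely disjoint non-meager Baire-property sets yield disjoint nonempty open sets modulo meager).

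The comparison with the paper is, however, of a special kind: the paper never proves Proposition 1.1 at all. It quotes it as Kuratowski's 1976 theorem, and the entire purpose of the paper is to show that the Continuum Hypothesis can be removed: Theorem 3.1 (Baire case) and Theorem 5.6 (measure case) prove in ZFC, for arbitrary 2nd countable spaces, the stronger conclusion that $\bigcup_{\ga\in\Delta}A_\ga$ and $\bigcup_{\ga\in\Gamma\sminus\Delta}A_\ga$ cannot even be \emph{separated} by Baire-property (resp.\ measurable) sets. Your route --- pass to the quotient Boolean algebra, then play an Ulam-matrix pigeonhole argument against ccc --- is essentially Kuratowski's original ``theorem on ideals'' argument (in the lineage of Ulam's theorem on $\aleph_1$). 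What it buys: it is short, treats measure and category uniformly, and works verbatim for any $\sigma$-algebra/$\sigma$-ideal pair with ccc quotient. What it costs: CH enters irreducibly in your reduction to $\om_1$. Without CH the nonempty pieces may number more than $\aleph_1$, and an $\aleph_1$-sized subfamily need not have non-meager (resp.\ non-null) union, so the key normalization $\Phi(\om_1)=1$ is lost; eliminating CH requires the entirely different product-space technique of Lusin--Novikov that the paper reconstructs (the matrix $\SE=\bigcup_{\ga<\Gamma}(A_\ga\times B_\ga)$ and the double category/measure estimate on $Y\times Y$), which also yields the stronger non-separation statement. So your argument proves the proposition as stated, but it is the historical CH-proof rather than the paper's ZFC machinery.
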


Recall that a subset $A$ in a topological space $Y$ is said to be
\textit{(of)\,1st category} (meager) if it is a countable union of nowhere dense sets; $A$ is said to have the \textit{Baire property} if   $A=(O\sminus P)\cup Q$, where $O$ is  open and $P, Q$ are 1st category sets.
Further, a function $f$ from $Y$ into a topological space $X$ has the Baire property (resp., is measurable with respect to a measure $\mu$ on $Y$) if for each open set $O\subset X$ its inverse image $f^{-1}(O)$ has the Baire property (resp., is $\mu$-measurable) in $Y$.
Some authors, in order to stress the analogy (in the sense of, e.g.,\,\cite{Oxt80}) between measure and category, instead of `has the Baire property', write `is $BP$-measurable'.

The result of Kuratowski provoked a flurry of activity. According to an unpublished 1977 manuscript of Prikry (quoted as reference number 27 in \cite{Kou86}),
 its `measure part' for  the Lebesgue measure on $Y=[0,1]$ was already known in ZFC to Solovay in 1970. Further, Prikry reports that in that setting, i.e., in ZFC and for $[0,1]$ with Lebesgue measure, Bukovsk\'y  got  both,  the `measure part' as well as the `Baire part' using models of set theory, and then  follows  with a neat analytic proof (discussed in Sectin 6 below) of a generalization of Bukovsk\'y's result.  Bukovsk\'y's research appeared with a considerable delay, in   \cite{Buk79}.   The same issue of the journal contained another paper, see Emeryk et al.\,\cite{Eme79a}, providing a  topological proof of the ` Baire part' of the statement with $Y$ being  \v Cech complete of weight less than or equal to the continuum.
   For  similar results see also \cite{Brz79}, still in the same issue of the journal as  \cite{Buk79} and \cite{Eme79a}.

The reason for the excitement
was a 1935 problem \cite[Problem 8]{Kur35}, and its measure analogue. It was already known that if $Y$ is a metric space, $X$ is a separable metric space, and  $f:Y\to X$ has the Baire property, then $f$ is continuous apart from a 1st category set. Kuratowski asked whether that conclusion could be extended to a non-separable framework.  Now,  again according to the already quoted manuscript of Prikry, Solovay using his result obtained  a positive answer to the just mentioned measure analogue of the problem in the following form: \textit{if $X$ is a metric space and $f: [0,1]\to X$ is measurable, then $f$ is almost continuous (Lusin
measurable)}. It was rather clear that the `Baire part'  ought to provide some answer to the Kuratowski question as well.

As expected, an answer was delivered by Emeryk et al.\,in  \cite{Eme79b}, a `sibling' paper of \cite{Eme79a} printed next to it,  as follows.
\begin{Th} Let $Y$ be a \v Cech complete
space of weight less than or equal to the continuum. If $f$ is a function having the Baire property from $Y$ into a metric space,  then $f$ is continuous apart from a 1st category set.
\end{Th}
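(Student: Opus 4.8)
The plan is to imitate the classical separable-range argument, but to drive it by a $\sigma$-discrete base of the target instead of by a countable base, and to use the partition result (Proposition~\ref{th:partition}, in the \v Cech complete version described above) precisely to tame the resulting uncountable union of exceptional sets. First I would fix a $\sigma$-discrete base $\mathcal B=\bigcup_n\mathcal B_n$ of the metric space $X$ (Bing--Nagata--Smirnov), so that each $\mathcal B_n$ is a pairwise disjoint family of open sets. For every $B\in\mathcal B$ the set $f^{-1}(B)$ has the Baire property; I write $f^{-1}(B)=U_B\triangle P_B$ with $U_B$ open and $P_B$ meager, and set $M_n=\bigcup_{B\in\mathcal B_n}P_B$ and $M=\bigcup_nM_n$. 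Exactly as in the classical case, if each $M_n$---hence $M$---is meager, then $f|_{Y\sminus M}$ is continuous: for $y\notin M$ and open $O\ni f(y)$ pick $B\in\mathcal B$ with $f(y)\in B\sbs O$; then $y\in f^{-1}(B)\sminus P_B\sbs U_B$, and $U_B\cap(Y\sminus M)$ is a neighbourhood of $y$ carried into $O$. Everything therefore reduces to showing that each $M_n$ is meager.

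The decisive ingredient is a localization fact, proved by contradiction with the partition result. Fix $n$, put $G=\bigcup\mathcal B_n$, and suppose $f^{-1}(G)$ is comeager on some nonempty open $W_0\sbs Y$ (an open subspace of $Y$ is again \v Cech complete and of weight at most the continuum). Since the members $B_\gamma$ of $\mathcal B_n$ are pairwise disjoint, every $\bigcup_{\gamma\in\Delta}B_\gamma$ is open and every $G\sminus\bigcup_{\gamma\in\Delta}B_\gamma$ is closed; hence each subunion of the family $\{f^{-1}(B_\gamma)\cap W_0\}_\gamma\cup\{W_0\sminus f^{-1}(G)\}$ is the trace on $W_0$ of the $f$-preimage of an open or of a closed set, and so has the Baire property. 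This family partitions $W_0$, and its last piece is meager. If every $f^{-1}(B_\gamma)\cap W_0$ were also meager, we would have a partition of $W_0$ into meager sets all of whose subunions have the Baire property, contradicting the partition result; consequently some $f^{-1}(B_\gamma)\cap W_0$ is non-meager, hence comeager on a nonempty open $W_1\sbs W_0$. In other words $f(W_1)\sbs B_\gamma$ apart from a meager set: on $W_1$ the map $f$ essentially falls into a single member of $\mathcal B_n$.

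It remains to globalize, and here I would invoke the Banach Category Theorem. Take any nonempty open $W$. If $f^{-1}(G)$ is meager on $W$, then each $U_B\cap W$, being open and almost equal to the meager $f^{-1}(B)\cap W$, is empty, so $M_n\cap W=f^{-1}(G)\cap W$ is meager. Otherwise $f^{-1}(G)$ is non-meager, hence comeager on some open $W_0\sbs W$, and the localization fact yields an open $W_1\sbs W$ with $f(W_1)\sbs B_{\gamma_0}$ mod meager; on $W_1$ every $U_B$ with $B\ne B_{\gamma_0}$ is empty and every $f^{-1}(B)\cap W_1$ with $B\ne B_{\gamma_0}$ is meager, so $M_n\cap W_1$ is meager. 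Thus the union $O$ of all open sets on which $M_n$ is meager is dense; $M_n\cap O$ is meager by the Banach Category Theorem, while $Y\sminus O$ is nowhere dense, so $M_n$ is meager, as required.

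The main obstacle is the localization fact: converting a local failure of essential separability into a genuine partition---of an \emph{open}, hence still \v Cech complete, subspace---into meager sets whose \emph{every} subunion keeps the Baire property, which is exactly what allows the partition result to intervene. The disjointness of each $\mathcal B_n$ is precisely the device that forces these subunions to be $f$-preimages of open or of closed sets (and so to have the Baire property despite being uncountable unions), while the Banach Category Theorem is what upgrades the local conclusions to a single global meager exceptional set.
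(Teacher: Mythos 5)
Your proof is correct, but note first that the paper itself never proves this statement: it is quoted from Emeryk--Frankiewicz--Kulpa \cite{Eme79b}, and the only proof of a theorem of this kind that the paper reproduces is Kunugi's (Section 4), valid for domains satisfying his condition $(\alpha)$. Your route is the natural ``partition theorem implies continuity theorem'' derivation, and the partition result you invoke for \v Cech complete spaces of weight at most the continuum is exactly the result of the sibling paper \cite{Eme79a}, which the present paper describes but does not prove; it is a legitimate black box here and plays precisely the role that condition $(\alpha)$ plays for Kunugi. The differences are instructive. You drive the proof by a Bing $\sigma$-discrete base of the target, so each level consists of pairwise disjoint \emph{open} sets, subunions are open, and their preimages have the Baire property for free; the price is the residual set $X\sminus G$, which you absorb as the preimage of a closed set. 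Kunugi instead uses Montgomery's decomposition \cite{Mon35} of the target into transfinite sequences of $G_\rho$-sets at mutual distance $>\frac1n$; these cover the target, so no residual piece arises, but subunions are then only $G_\rho$, so he needs preimages of differences of open sets to have the Baire property. Where you localize and then globalize with the Banach Category Theorem, Kunugi runs his condition $(P)$ and the set $\{X_\xi\}_{II}$ (2nd category at each of its points, the closure of an open set) --- the same localization--globalization device in different clothing. And where you reach a contradiction directly (every subunion has the Baire property, yet the partition theorem says one must fail it), Kunugi's condition $(\alpha)$ produces two subunions everywhere 2nd category in a common open set and then invokes \cite[\S11,\,IV]{Kur66}.

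Three small points to tighten. Your claim that $G\sminus\bigcup_{\ga\in\Delta}B_\ga$ is closed is false --- it is open, being the union of the remaining basis elements --- but the conclusion you need is right: a subunion containing the residual piece is the complement in $W_0$ of the trace of the preimage of the open set $\bigcup_{\ga\notin\Delta}B_\ga$, hence the trace of the preimage of a closed set, and in any case a union of two sets with the Baire property. In your case (b), the union of the uncountably many sets $f^{-1}(B)\cap W_1$, $B\neq B_{\ga_0}$, is meager not because each term is meager (uncountable unions of meager sets need not be meager) but because, by disjointness, all of them lie inside the single meager set $W_1\sminus f^{-1}(B_{\ga_0})$; you have this containment, but it should carry the weight explicitly. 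Finally, ``open and meager implies empty'' uses that \v Cech complete spaces, and hence their open subspaces, are Baire spaces; this deserves a word.
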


In the realm of  metric spaces the final word
 concerning Kuratowski's problem is due to Frankiewicz and Kunen
 \cite{Fra87}. Of course, as one can guess, research continued (see, e.g., \cite{Fre87} and \cite{Kou86}). But what happened later is irrelevant to our story, so we  stop at this point.

\section{Surprise, surprise}

The section above gave a brief history of initial research provoked by Kuratowski's paper, at least  history which seems to be currently the accepted one. The initial motivation for the present article came out of an unknown\footnote{Actually, the first author learned about the existence of Kunugi's paper in 1983 from \cite{Mor83}. Somehow, the news remained private and did not spread out despite some efforts to the contrary.}  fact that a first attack on  the Kuratowski problem,  due to Kinjiro Kunugi, happened already in 1936 \cite{Kun36}, basing on  \cite{Lus34} and \cite[Suppl\'ement]{Sie34}.

In order to be able to discuss that story, we need some more terminology. A subset $E$ of reals $\BR$ is said to be an
\textit{always (of) 1st category set} if it is 1st category on every perfect set. A subset $E$ of a topological space $Y$ is \textit{2nd category} if it is not 1st category; it is \textit{1st category at (a point)} $x\in Y$ if there is an open set (equivalently, a neighborhood \cite[Sec.\,10,\,V]{Kur66}) $O$ containing $x$, such that $E\cap O$ is 1st category. Further, $E$ \textit{is 2nd category at $x$} if it is not 1st category at $x$, i.e., if $\,E\cap V$ is 2nd category  for each neighborhood $V$ of $x$. If $K$ is a subspace of $Y$ and $E\subset K$, then $D_K(E)$ denotes the set of all points $x$ in $K$ such that $E$ is 2nd category in $x$ relative to $K$. The subscript $K$ is dropped if $K=Y$.  If $E$ is 1st (resp., 2nd) category at each point of a set $P$, we will also say that $E$ \textit{is everywhere 1st} (resp., \textit{2nd}) \textit{category in} $P$.

\medskip

Here is the  `Baire part' of Lusin's statement  in his 1934 CRAS note \cite{Lus34}.

\begin{Th}\label{th:LusinDecomposition}
Let $F$ be a subset of reals which is not always of  1st category. Then
$F$ can be written as the union of   two disjoint subsets $F_1, F_2$ that cannot be separated by  Borel sets.
\end{Th}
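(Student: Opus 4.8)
The plan is to localize the failure of ``always first category'' to a single perfect set on which $F$ is large, and then to split $F$ there into two pieces each so spread out that no Borel set can slip between them.

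First I would exploit the hypothesis. Since $F$ is not always of first category, there is a perfect set $P_0\sbs\BR$ on which $F$ is of second category, i.e.\ $F\cap P_0$ is non-meager in the Polish space $P_0$. Using the category derivative $D$ introduced above, I discard from $P_0$ the largest relatively open set on which $F$ is meager (it exists by the Banach category theorem); the remainder $P$ is still perfect---it carries no isolated points, since a singleton is meager and would be swallowed by the discarded open set---and on it $F$ is everywhere of second category: for every nonempty relatively open $V\sbs P$ the trace $F\cap V$ is non-meager in $P$. It then suffices to partition $E:=F\cap P$ into two disjoint subsets $E_1,E_2$, each everywhere of second category in $P$; for then $F_1:=E_1\cup(F\sminus P)$ and $F_2:=E_2$ partition $F$, and any Borel set separating $F_1$ from $F_2$ would already separate $E_1$ from $E_2$.

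The conceptual core is a separation lemma which I would isolate and prove first: \emph{two disjoint subsets $A,B$ of $P$, each everywhere of second category in $P$, cannot be separated by a Borel set.} Indeed, suppose $C\sbs\BR$ is Borel with $A\sbs C$ and $B\cap C=\emptyset$. Then $C\cap P$ is Borel, hence has the Baire property in $P$, so $C\cap P=U\,\triangle\,M$ for some relatively open $U\sbs P$ and some meager $M$. If $U=\emptyset$ then $A\sbs C\cap P=M$ is meager, contradicting that $A$ is of second category. If $U\ne\emptyset$ then $U$ is a nonempty portion of $P$, so $B\cap U$ is non-meager; but $B\cap C=\emptyset$ forces $B\cap U\sbs U\sminus(C\cap P)\sbs U\,\triangle\,(C\cap P)=M$, so $B\cap U$ is meager, again a contradiction. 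This uses only that Borel sets have the Baire property relative to $P$ together with the Banach category machinery, so it is pure ZFC.

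It remains to build the split, which I expect to be the main obstacle. I would enumerate, in a transfinite sequence of length continuum, all pairs $\lan V_n,M\ran$ with $V_n$ a basic open subset of $P$ and $M$ a meager $F_\sigma$ set; there are exactly $2^{\aleph_0}$ of them. At the $\xi$-th stage the target $F\cap(V_n\sminus M)$ is non-meager (a non-meager trace minus a meager set), so I choose two fresh points of it, assigning one to $E_1$ and one to $E_2$, sweeping any leftover points of $E$ into $E_1$ at the end. By construction each $E_i$ meets every set of the form $V_n\sminus M$, which says exactly that $E_i\cap V_n$ is non-meager for each $n$, i.e.\ that $E_i$ is everywhere of second category in $P$. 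The delicate point---and the reason this is the hard step---is the bookkeeping: one must guarantee that at each stage the target still contains points not yet committed to the other class, which hinges on the continuum-many requirements not exhausting the uncountable targets. This is transparent under the Continuum Hypothesis, where each non-meager target already has full cardinality $2^{\aleph_0}$, and I expect Lusin's argument to run along these lines. A quick non-constructive shortcut to \emph{existence} of some inseparable partition is a cardinality count: the Borel traces $\{F\cap C:C\text{ Borel}\}$ number at most $2^{\aleph_0}$, whereas $\mathcal P(F)$ has $2^{|F|}$ members, so once $|F|=2^{\aleph_0}$ some $A\sbs F$ is not a Borel trace, and $A$, $F\sminus A$ are then Borel-inseparable.
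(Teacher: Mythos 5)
Your localization to a perfect set $P$ with $E=F\cap P$ everywhere of second category in $P$, and your separation lemma (two disjoint subsets of $P$, each everywhere of second category in $P$, cannot be separated by a Borel set) are both correct and coincide with Parts (1) and (3) of the paper's proof. The genuine gap is exactly where you expected it: the construction of the split $E=E_1\cup E_2$. Your transfinite recursion of length $2^{\aleph_0}$ needs, at every stage, two \emph{fresh} points in the current target $F\cap(V_n\sminus M)$; since up to $2^{\aleph_0}$ points get committed along the way, this requires every non-meager target to have cardinality $2^{\aleph_0}$. That is not a theorem of ZFC: it is consistent (e.g.\ in the Cohen model, where the least cardinality of a non-meager set is $\aleph_1<2^{\aleph_0}$) that $E$ itself, hence every target, has size $\aleph_1<2^{\aleph_0}$, and then the recursion stalls, possibly leaving one class disjoint from some target. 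You concede the step is ``transparent under CH,'' but the theorem is a ZFC theorem, and this is the entire point of the paper: Lusin--Novikov and Kunugi obtained these results \emph{without} CH, in contrast with Kuratowski's Proposition 1.1, which assumes it. Your fallback counting argument has the same defect: it needs $|F|=2^{\aleph_0}$, while the hypothesis ``not always of first category'' yields only uncountability in ZFC. (The paper's footnote on Q-sets shows, moreover, that no purely cardinality-based argument can work: consistently there exist uncountable sets all of whose subsets are traces of Borel sets.)

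What the paper does instead — and what is missing from your proposal — is Novikov's device, which replaces the transfinite diagonalization by countable combinatorics. Well-order $E$ so that every initial segment is an always first category set (if some segment is bad, pass to the first bad one and relocalize as in Part (1)), and form the planar set $\SE=\{(x,y):x,y\in E,\ \mathrm{Ind}(y)<\mathrm{Ind}(x)\}$. Each vertical section $\SE_x$ is a segment, hence a countable union of sets nowhere dense in $P$, which gives a countable decomposition $\SE=\bigcup_n H_n$; each horizontal section $\SE_y$ with $y\in E$ is a tail of $E$, hence everywhere second category in $P$. Two rounds of countable pigeonhole over a countable base of $P$ (first on horizontal sections, producing an index $\nu$ and a portion; then on the condition that a basic portion on the line $x=x_0$ misses $(H_\nu)_{x_0}$) yield two disjoint subsets of $E$ — one of the form $E^{(\kappa)}$, the other a single horizontal section $(H_\nu)_{y_0}$ moved onto the axis — each everywhere second category in a common portion $\pi_\mu$. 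Your separation lemma then finishes the proof. It is this use of the well-ordering with small segments, rather than point-by-point diagonalization against all meager sets, that keeps the argument inside ZFC.
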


Before going ahead with the proof, let us state an observation:
\medskip

{\bf(0)} \textit{Let $A$ and $B$ be subsets of a topological space $Y$, with $A\subset B$. If $A$ is 2nd category (relative to $Y$) at each point of $B$, then  $A$ is so relative to $B$.}

Indeed, suppose to the contrary that there exists an $x\in B$ and an open neighborhood $V$ of $x$ in $B$ such that $V\cap A$ is 1st category relative to $B$ and, therefore, 1st category. Let $V^*$ be an open set such that
$V^*\cap B= V$.  As $V^*\cap A=V\cap A$, we have $x\in V^*\cap B$ and $V^*\cap A$ is 1st category; a contradiction.

\begin{proof}

{\bf (1)} Lusin's starting point is the following reduction. Let $E$  be a set which is not always of 1st category (later, in Part 2 of the proof, $E$ appears first as a subset of reals lying on the axis $OX$ in the plane). Then, without loss of generality, we may assume that $E$ is well-ordered into a transfinite sequence $E=\{x_0,x_1,\dots,x_\ga,\dots\}$ in such a way that every segment of $E$ is an always 1st category set. Moreover, $E$ is contained in a perfect set $P$ such that $E$ is everywhere 2nd category in $P$ relative to $P$ .

Here is the argument. Given $F$ as in the statement of the theorem, let $\{x_0,x_1,\dots,x_\ga,\dots\}$ be a well-ordering  of $F$. If the requirement about its segments is not satisfied, then find the first index $\Gamma$ such that $A=\{x_\ga:\ga<\Gamma\}$ is not always of 1st category.  The latter means that  there exists a perfect set $K$  such that $B= A\cap K$ is 2nd category relative to $K$.
 According to \cite[\S10,VI(14)]{Kur66}, $B=C\cup E$, where $C$ is 1st category relative to $K$ and $E$ is 2nd category relative to $K$ at each of its points. Let $P= D_K(E)$. Then $E\subset P$ and by \cite[\S10,V(11)]{Kur66}, $P$ is the closure of its interior relative to $K$, whence perfect. By the very definition of $P$, the set $E$ is everywhere 2nd category in $P$ relative to $K$ and, by the observation (0) above, relative to $P$. The well-ordering of $E$ is the one induced from $A$, assuring that the assumption about segments is satisfied.

\medskip

{\bf(2)}
By part (1) above, one is faced with a well-ordered set $E=\{x_0,x_1,\dots,x_\ga,\dots\}$  contained in a perfect set $P$; each segment of this transfinite sequence is an always 1st category set, and $E$ is everywhere 2nd category in $P$ relative to $P$.

\textit{This was the setting in which Lusin started the proof. Below is our translation of that proof from the French. We keep to the original closely, including parts that were emphasized by italics.}

\medskip

Locate on the axis $OY$ of the plane $XOY$ a set identical with $E$  and denote by $\SE$, where $\SE\equiv \{\SM(x,y)\}$, the set of points $\SM(x,y)$ of the plane such that $x$ and $y$ belong to $E$ and are such that $Ind(y)<Ind(x)$: here $Ind$ means the index (finite or transfinite) of a point  in  the well-ordered sequence $E$.

Denote by $\SE_{x_0}$ and  $\SE_{y_0}$ two linear sets of all points of $\SE$ located, respectively, on the lines $x=x_0$ and $y=y_0$. It is clear that $\SE_x$ is an always  1st category set for each $x$. Thus, we can write
$$
\SE=\bigcup_{x\in E}\SE_x=\bigcup_{x\in E}\bigcup_{n=1}^\oo\SE^{(n)}_x=\bigcup_{n=1}^\oo\bigcup_{x\in E}\SE^{(n)}_x= \bigcup_{n=1}^\oo H_n,
$$
where $\SE^{(n)}_x$ is nowhere dense in $P$ (parallel to $OY$).  In  contrary, $\SE_y$ is  evidently everywhere  2nd category in $P$ relative to $P$ (parallel to $OX$) when $y\in E$.

It follows that, if
$\pi_1,\pi_2\dots \pi_i\dots$
 is  a sequence formed by subsets of $P$ determined by the intervals with rational end points, for each $y\in E$ there corresponds a pair of positive integers  $(n,i)$ such that $(H_n)_y$ is  everywhere 2nd category in $\pi_i$ (parallel to $OX$). Let
$E_{n,i}$ be  the set of  points  $y\in E$ corresponding to the same  pair $(n,i)$. As there is a countable number of the sets $E_{n,i}$ and as  $E$ (on $OY$) is 2nd category in $P$, one sees easily that there exists a pair $(\nu,\iota)$ such that $E_{\nu,\iota}$ is  dense  in some $\pi_j$ (on $OY)$.

Now,$(H_\nu)_x$ is nowhere dense  in $P$\ (parallel to $OY)$ whatever $x\in E$. Thus, for each $x_0\in E$ there corresponds positive integer  $k$ such that $\pi_k$ in $P$ (on $x=x_0$) contained   in $\pi_j$ does not contain any point of $\pi_k\cap (H_\nu)_{x_0}$.
Denote by  $E^{(k)}$ the set of  points  $x\in E$ that correspond to the same number $k$. As there is countable number of $E^{(k)}$ and as  $E$ is everywhere 2nd category in $P$,  there exists  $\pi_\mu$ in $P$ (on $OX$) contained in $\pi_\iota$  and such that  $E^{(\kappa)}$ \textit{is everywhere 2nd category in} $\pi_\mu$.

But there surely exists a point $y_0\in E_{\nu,\iota}$ that belongs to $\pi_\kappa$ (on $OY$). It is clear that  $(H_\nu)_{y_0}$ is  everywhere 2nd category in  $\pi_\iota$ and, consequently, \textit{is  everywhere 2nd category in} $\pi_\mu$.

 In the end, \textit{we have found two subsets, $E_1$ and  $E_2$  of $E$ that are disjoint and such that each one is everywhere 2nd category in} $\pi_\mu$.
\medskip

This was the end of the original proof. Lusin did not name $E_1$ and $E_2$, but these are $E^{(\kappa)}$ and $(H_\nu)_{y_0}$ that was moved from the line $y=y_0$ onto $OX$.

\medskip

{\bf(3)} By part (1) of the proof, $F=F_0\cup E$ with $F_0\cap E=\emptyset$, and $E\subset P$, where  $P$ is a perfect set  such that  $E$ satisfies the conditions specified in (2). As $E\subset P$ and  Borel sets relative to $P$ are traces of Borel sets, it is sufficient to consider $P$ as the enveloping space. Let $E_1$ and $E_2$ be subsets of $ E$ found in (2).   Suppose there exists   a Borel subset $B$ of $P$ providing separation, i.e.,  let $B\supset E_1$ and  $B$ be disjoint with $E_2$. Clearly, $B$ is 2nd category at each point of $\pi_\mu$ and  its complement  $P\sminus B$, containing $E_2$, is also 2nd category at each point of $\pi_\mu$. Yet, in view of \cite[\S11,\,IV]{Kur66}, $B$ cannot even have the Baire property relative to $P$ and, a fortiori, cannot be a Borel subset of $P$. A contradiction. Consequently, $E_1$ and $E_2$ cannot be separated by Borel sets and the same is true for $F_1=F_0\cup E_1$ and $F_2=F\sminus F_1$.
\end{proof}

\begin{Rem}   Lusin treated as evident not only  part (1), but also  part (3) above; only part (2)  was given in the Note.
 \end{Rem}

Kunugi, who was after Kuratowski's problem, imposed at the beginning of his paper \cite{Kun36}  the following \textit{condition $(\alpha)$} on  a   space $Y$:
\medskip

\textit{Given an ordinal number $\ga$, let   $\{A_\xi\}$ be  a disjoint family of 1st category subsets of $Y$with $\xi$ running through all ordinals less than $\ga$. If
the union $\bigcup_{\xi<\ga} A_\xi$ is 2nd category, then one can break the union into two disjoint parts $\bigcup_{\xi'} A_{\xi'}$ and $\,\bigcup_{\xi''} A_{\xi''}$   in such a way that each part is everywhere 2nd category in a common open subset of $\,Y$.}
\medskip

For a proof that $(\alpha)$ holds in a separable metric space, he referred to Lusin \cite{Lus34} and to Sierpi\'nski
 \cite[Suppl\'ement]{Sie34}.

Now Sierpi\'nski, referring to a private communication  from Lusin, stated in his Suppl\'ement  the following `Lemme'.
\begin{Lem}\label{prop:SierpLemma} Let $E$ be a 2nd category subset of an interval $I$. There exist an interval $J\subset I$ and two disjoint subsets $E_1$ and $ E_2$ of $E$ that are  2nd category at each point of~$J$.
\end{Lem}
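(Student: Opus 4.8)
The plan is to place $E$ into exactly the configuration treated in part~(2) of the proof of Theorem~\ref{th:LusinDecomposition}, but with the enveloping perfect set chosen to be a \emph{closed interval}. This extra demand is the whole point: one cannot simply invoke Theorem~\ref{th:LusinDecomposition} as a black box, because the perfect set produced by its part~(1) may be nowhere dense (e.g.\ a Cantor set), in which case the two subsets it delivers are 2nd category only relative to that set and are 1st category at every point of every genuine interval. Here, by contrast, we need $E_1$ and $E_2$ to be 2nd category at each point of an interval $J\sbs I$, i.e.\ in the ambient sense, so the enveloping set must itself be an interval.

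First I would localize. Since $E$ is 2nd category in $I$, it is 2nd category at some point (otherwise it would be everywhere, and hence, by \cite[\S10,VI]{Kur66}, altogether of 1st category); thus $D(E)\neq\emptyset$, and by \cite[\S10,V(11)]{Kur66} $D(E)$ equals the closure of its interior, so $\operatorname{int}D(E)$ contains a subinterval $J_0\sbs I$ at each point of which $E$ is 2nd category. Put $P_0=\ovl{J_0}$, a closed interval and therefore a perfect set on which, for interior points, relative and ambient category coincide. Next I would fix the well-ordering. Well-order $E\cap J_0$ and let $\Gamma$ be the least index for which the initial segment $E^*=\{x_\xi:\xi<\Gamma\}$ is 2nd category in $P_0$; such $\Gamma$ exists because $E\cap J_0$ is itself 2nd category, and by minimality every \emph{proper} segment of $E^*$ is 1st category in $P_0$. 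Applying the localization once more to $E^*$, I obtain a subinterval $J_1\sbs J_0$ on which $E^*$ is everywhere 2nd category; set $P_1=\ovl{J_1}$ and $\tilde E=E^*\cap J_1$, equipped with the induced well-ordering. Since a trace on $P_1\sbs P_0$ of a set that is 1st category in $P_0$ is 1st category in $P_1$, every proper segment of $\tilde E$ is 1st category in $P_1$, while $\tilde E$ is everywhere 2nd category in $P_1$.

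The pair $(\tilde E,P_1)$ now satisfies verbatim the hypotheses at the head of part~(2) of the proof of Theorem~\ref{th:LusinDecomposition}, with $P_1$ an interval. Running that construction word for word---forming $\SE$ in the plane, decomposing the 1st-category vertical sections into the nowhere-dense-in-$P_1$ pieces $\SE_x^{(n)}$, and locating the indices $(\nu,\iota)$, $\kappa$, $\mu$---produces two disjoint subsets $E_1,E_2\sbs\tilde E\sbs E$, each everywhere 2nd category in a portion $\pi_\mu=P_1\cap(a,b)$ of $P_1$. Because $P_1$ is a closed interval, being everywhere 2nd category in $\pi_\mu$ relative to $P_1$ is the same as being 2nd category at each point of the open interval $J:=\operatorname{int}\pi_\mu\sbs I$, which is the asserted conclusion. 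The step I expect to be the main obstacle is precisely this transfer of part~(2) to the interval setting: one must check that its section argument consumes only the 1st-category-ness of the segments \emph{relative to the interval} $P_1$ (not the stronger ``always 1st category'' used by Lusin), and that this property, together with everywhere-2nd-category-ness of $\tilde E$ in $P_1$, genuinely survives the two localizations and the passage to the induced well-ordering.
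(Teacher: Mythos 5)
Your proposal is correct and follows essentially the same route as the paper: replace the ``always 1st category'' reduction of part (1) of the proof of Theorem~\ref{th:LusinDecomposition} by one based on $D(E)$ (which, by \cite[\S10,V(11)]{Kur66}, is the closure of its non-empty interior) and rerun the part (2) machinery, checking that it only needs 1st-category-ness of segments relative to the enveloping set. The lone cosmetic difference is that you arrange for that enveloping perfect set to be a closed interval \emph{before} running part (2), whereas the paper keeps $P=D(E)$ and extracts the interval $J$ only at the very end, from $\pi_\mu=(a,b)\cap D(E)$.
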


Lemma \ref{prop:SierpLemma}  ought to be compared with the part (2) of the proof of Theorem~\ref{th:LusinDecomposition}. Sierpi\'nski's assumption is stronger, and the conclusion too, because Lusin's conclusion is `at each point of $J\cap P$' for some perfect set $P$, and Sierpi\'nski has `at each point of $J$'.

\textit{The condition $(\alpha)$ of Kunugi is nothing else, but
Lemma~\ref{prop:SierpLemma} (in the realm of separable metric spaces), in which points of the set $E$ are replaced by disjoint subsets of 1st category.}

Whether Kunugi thought that the passage from `points' to  `disjoint 1st category subsets' posed no problem, or knew how to accomplish it and yet did not bother to give a proof, we will never know. What is not in doubt, is the fact that it was the `Lemme' in \cite{Sie34}, which prompted him to formulate the condition $(\alpha)$.

  We mention this, because we first  considered Sierpi\'nski's proof, which is rather long and does not bear any resemblance with the arguments in (2) above. It was not visible how to generalize it, to get $(\alpha)$. Only then, we focused on  Lusin's Note,  despite the fact that a direct translation of the result there would not produce the condition $(\alpha)$.

It turned out that the technique developed in (2) permits it to obtain Lemma~\ref{prop:SierpLemma} as well. Indeed, let us replace the assumption `$F$ is not always 1st category' in Theorem~\ref{th:LusinDecomposition},  by `$F$ is 2nd category'.  The reduction described in part (1) of its proof can now be performed without the intermediate step involving $K$. Hence, one ends up with
$E\subset P$ and
 $P=D(E)$. It follows that  the set $\pi_\mu$ appearing at the end of Lusin's proof is a non-empty set of the form $(a,b)\cap D(E)$. It must contain an interval $J$, since $D(E)$ is the closure of its non-empty interior
(\cite[\S10,V(11)]{Kur66}),
  which gives Sierpi\'nski's conclusion.

\bigskip

 \section{Disjoint families of 1st category sets}

An examination of the technique used  in the proof of Theorem~~\ref{th:LusinDecomposition} allows us to prove  the following result, which confirms that Kunugi's claim about the validity of the condition $(\alpha)$  was correct. The  relevant topological fact needed for the proof is  the existence of a countable base for open sets, i.e., that the space is \textit{2nd countable}.

\begin{Th}\label{prop:Lusinlemma2} Let $\{A_\ga:\ga\in\Gamma\}$ be a family of disjoint 1st category subsets of a 2nd countable topological space $Y$. If  the union $E=\bigcup_{\ga\in\Gamma}A_\ga$
is  2nd category, then there exist $\Delta\subset\Gamma$ and an open set $O$  such that $\bigcup_{\ga\in\Delta}A_\ga$ and $\bigcup_{\ga\in\Gamma\sminus\Delta}A_\ga$ are everywhere 2nd category in $O$. In consequence, $\bigcup_{\ga\in\Delta}A_\ga$ and $\bigcup_{\ga\in\Gamma\sminus\Delta}A_\ga$ cannot be separated by  sets having the Baire property.
\end{Th}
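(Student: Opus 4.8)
The plan is to run Lusin's argument from part (2) essentially verbatim, with the single structural change that the \emph{points} $x_\ga$ of his well-ordered set are replaced by the \emph{blocks} $A_\ga$. The whole difficulty is then concentrated in checking that the two sets produced at the end of his construction are in fact \emph{unions of blocks}, so that they actually determine a subset $\Delta\sbs\Gamma$ as required.

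First I would carry out the reduction. Well-order $\Gamma$ and let $\lambda$ be the least ordinal with $\bigcup_{\ga<\lambda}A_\ga$ 2nd category (it exists because $E$ is 2nd category, and it is a limit ordinal, since adjoining one more 1st category block to a 1st category union keeps it 1st category). Discarding the blocks of index $\geqs\lambda$ and passing to the nonempty open set $O^{*}=\operatorname{int}D(E)$ — legitimate because $D(E)$ is the closure of its interior and $E\sminus D(E)$ is 1st category (Kuratowski \S10) — I may assume that $Y$ is 2nd countable, that $E=\bigcup_{\ga<\lambda}A_\ga$ is everywhere 2nd category in $Y$, and that every proper initial union $\bigcup_{\ga<\beta}A_\ga$ is 1st category. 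The discarded blocks are simply thrown into the complementary part at the very end, where enlarging a union can only preserve ``everywhere 2nd category''.

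Next, mimicking Lusin in the (still 2nd countable) product $Y\times Y$, I set $\SE=\bigcup_{\beta<\alpha<\lambda}A_\alpha\times A_\beta$. For $x\in A_\alpha$ the column $\SE_x=\bigcup_{\beta<\alpha}A_\beta$ is an initial union, hence 1st category; for $y\in A_\beta$ the row $\SE_y=\bigcup_{\alpha>\beta}A_\alpha=E\sminus\bigcup_{\alpha\leqs\beta}A_\alpha$ is $E$ with a 1st category set removed, hence everywhere 2nd category in $Y$. Fixing for each $\alpha$ a decomposition $\bigcup_{\beta<\alpha}A_\beta=\bigcup_n S_\alpha^{(n)}$ into sets nowhere dense in $Y$, and putting $H_n=\bigcup_\alpha A_\alpha\times S_\alpha^{(n)}$, one gets $\SE=\bigcup_n H_n$ with $(H_n)_x=S_\alpha^{(n)}$ nowhere dense for $x\in A_\alpha$. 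From here the selection and counting of (2) go through unchanged, the countable base $\{\pi_i\}$ of $Y$ replacing the rational intervals: one extracts a pair $(\nu,\iota)$ and a basic $\pi_j$ in which $E_{\nu,\iota}$ (the $y$ whose row $(H_\nu)_y$ is everywhere 2nd category in $\pi_\iota$) is dense, then an index $\kappa$ and a basic $\pi_\mu\sbs\pi_\iota$ in which $E^{(\kappa)}$ (the $x$ assigned a $\pi_\kappa\sbs\pi_j$ disjoint from the nowhere dense $(H_\nu)_x$) is everywhere 2nd category, and finally a point $y_0\in E_{\nu,\iota}\cap\pi_\kappa$. As in (2), $E_1=E^{(\kappa)}$ and $E_2=(H_\nu)_{y_0}$ are disjoint and each is everywhere 2nd category in $\pi_\mu$.

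The crucial new verification — and the step I expect to be the real obstacle — is that $E_1$ and $E_2$ are unions of blocks, and this is exactly where disjointness of the family is used. Since $(H_\nu)_x=S_\alpha^{(\nu)}$ depends only on the index $\alpha$ of the block containing $x$, the selector $x\mapsto k(x)$ is constant on each $A_\alpha$, whence $E^{(\kappa)}=\bigcup_{\ga\in\Delta}A_\ga$ for $\Delta=\{\alpha:k_\alpha=\kappa\}$; likewise $(H_\nu)_{y_0}=\bigcup\{A_\ga:y_0\in S_\ga^{(\nu)}\}$, and the disjointness $E_1\cap E_2=\emptyset$ forces the corresponding index set to lie in $\Gamma\sminus\Delta$. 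Taking $O=\pi_\mu$ (open in the original space, since $\pi_\mu\sbs O^{*}$ and relative category on an open subspace agrees with the ambient one), both $\bigcup_{\ga\in\Delta}A_\ga\supset E_1$ and $\bigcup_{\ga\in\Gamma\sminus\Delta}A_\ga\supset E_2$ are everywhere 2nd category in $O$. For the final clause I would argue exactly as in part (3): if a set $B$ with the Baire property separated the two unions, then $B$ and $Y\sminus B$ would both be 2nd category at every point of $O$, which is impossible for a set with the Baire property; hence no such separation exists.
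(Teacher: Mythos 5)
Your proposal is correct and takes essentially the same route as the paper's own proof: the same product set $\SE=\bigcup_{\ga}(A_\ga\times B_\ga)$, the same double counting over a countable base yielding $(\nu,\iota)$, $\pi_j$, $\kappa$, $\pi_\mu$ and $y_0$, the same saturation observation that $E^{(\kappa)}$ and $(H_\nu)_{y_0}$ are unions of blocks (which is where disjointness of the family enters), and the same Kuratowski \S 11 argument ruling out separation by a set with the Baire property. The only difference is bookkeeping: you normalize at the outset by truncating at $\lambda$ and restricting to $\operatorname{int}D(E)$ so that $E$ becomes everywhere 2nd category, whereas the paper keeps the ambient space and instead localizes inside the counting step via $\operatorname{Int}D((H_n)_y)\neq\emptyset$.
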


\begin{proof}  We may assume without loss of generality (cf.\,\,Part 1 in the proof of Theorem~\ref{th:LusinDecomposition})  that $\Gamma$ is an ordinal, $\ga<\Gamma$, and that for each $\ga<\Gamma$, the set $\bigcup_{\beta<\ga}A_\beta$ is of 1st category.

Consider the product $ Y\times Y$. It will be convenient to speak in terms of the  $XOY$ coordinate system with the first copy of $Y$ as  the `horizontal x-axis', and the second  copy of $Y$ as the `vertical  $y$-axis'.  For any $\beta<\Gamma$, set $B_\beta  =\bigcup_{\ga<\beta}A_\ga$. Define $\SE\subset Y\times Y$ by
$$
\SE= \bigcup_{\ga<\Gamma}(A_\ga\times B_\ga).
$$
Note that the `vertical' set $\SE_{x_0}$ lying on the `line $x=x_0$' (i.e., on
$\{(y,x_0):y\in Y\})$ is 1st category for every $x_0\in Y$. Write
$$
\SE=\bigcup_{x\in E}\SE_x=\bigcup_{\ga<\Gamma}\bigcup_{x\in A_\ga}(\{x\}\times B_\ga)=\bigcup_{\ga<\Gamma}\bigcup_{n=1}^\oo (A_\ga\times B^{(n)}_\ga)=\bigcup_{n=1}^\oo H_n,
$$
where $H_n=\bigcup_{\ga<\Gamma}(A_\ga\times B^{(n)}_\ga)$ and the sets $B^{(n)}_\ga$ are nowhere dense in $Y$ (`parallel' to $y$-axis).

Let $$O_1,O_2\dots O_i\dots$$ be a countable base of open sets of $Y$.
 For each $y_0\in E$, one can find   $\alpha$  such that $y_0\in A_\alpha$. Then,  $y\in B_\beta$ for any $\beta>\alpha$  and
$\SE_ {y_0}=\bigcup_{\alpha<\beta<\Gamma}A_\beta$. In particular,
the `horizontal' set $\SE_y$ lying on the ` line $y=y_0$' (i.e., on
 $\{(x,y_0): x\in Y\}$) is 2nd category for each $y_0\in E$.

As $\SE_y=(\bigcup_{n=1}^\oo H_n)_y$ is 2nd category, there exists $n\in\BN$ such that $(H_n)_y$ is 2nd category and so, by \cite[\S10 (7) and (11)]{Kur66}, $Int D((H_n)_y)$ is non-empty. It follows that for every $y\in E$,
 one can find a pair $(n,i)$ of naturals such that $(H_n)_y$ is everywhere 2nd category in $O_i$ (`parallel' to $x$-axis).
Let
$$
E_{n,i}=\{y\in E: (H_n)_y\text{ is everywhere 2nd category in  } O_i  \},\leqno(*)
$$
  i.e., the set of  all points  $y\in E$ corresponding to the same pair  $(n,i)$.

As $E=\bigcup\{E_{n,i}: n\in \BN, i\in \BN\}$ and  $E$ (on $y$-axis) is 2nd category, there exists a pair $(\nu,\iota)$ such that $E_{\nu,\iota}$ is  dense  in some $O_j$ (on $y$-axis).

Further, as $(H_\nu)_x$ is nowhere dense in $Y$ (`parallel' to $y$-axis) for any $x\in E$, it follows that  for each $x_0\in E$ there exist a natural number $k$ and $O_k$ (on the line $x=x_0$) contained   in $O_j$  (moved from $y$-axis onto the line $x=x_0$) such that  $O_k\cap (H_\nu)_{x_0}=\emptyset$.

Denote by  $E^{(k)}$ the set of points $x\in E$ that correspond to $k\in\BN$.  If $y\in E_{\nu,\iota}$, in view of $(*)$, $(H_\nu)_y$ is everywhere 2nd category in $O_\iota$. A fortiori $(E)_y$ is so there too. But $E=\bigcup_{k=1}^\oo E^{(k)}$, so there must exist $O_\mu$ (on $x$-axis) contained in $O_\iota$ and $\kappa$ such that $E^{(\kappa)}$is everywhere 2nd category in $O_\mu$.

Now suppose $x_1 \in E^{(\kappa)}$ and $x_2$ is such that $x_1,x_2$ belong to the same $A_\ga$. Then $O_\kappa\cap H_\nu=\emptyset$ on $x=x_1$ implies the same on $x=x_2$ and therefore $x_2\in E^{(\kappa)}$, i.e., $E^{(\kappa)}$ is \textit{saturated} (in the sense that it defines $\Delta\subset\Gamma$ such that $E^{(\kappa)}=\bigcup_{\ga\in\Delta}{A_\ga}$).

But, as $E_{\nu,\iota}$ was dense in $O_j$, there surely exists a point $y_0\in E_{\nu,\iota}$ which belongs to $O_\kappa$ (on $y$-axis). Since  $(H_\nu)_{y_0}$ is  everywhere 2nd category in  $O_\iota$, it  is so in $O_\mu$ as well.

Again, suppose that $(x_1,y_0)\in(H_\nu)_{y_0}$ and  $x_2$ belongs to the same $A_\ga$ as $x_1$. Then, from the definition of the sequence $(H_n)$, also $(x_2,y_0)\in (H_\nu)_{y_0}$. Hence $(H_\nu)_{y_0}$ is saturated and therefore defines $\Delta'\subset\Gamma$ such that $\bigcup_{\ga\in\Delta'}A_\ga$ (moved on the line $y=y_0$) equals $(H_\nu)_{y_0}$. By construction, $E^{(\kappa)}$ and $(H_\nu)_{y_0}$ (moved from the line $y=y_0$ onto $x$-axis)  are disjoint and, consequently, also $\Delta$ and $\Delta'$ are disjoint. Moreover,  the corresponding unions are everywhere 2nd category in $O_\mu$.
It is also clear that $\bigcup_{\ga\in\Gamma\sminus\Delta} A_\ga$, containing $\bigcup_{\ga\in\Delta'}A_\ga$, must be everywhere 2nd category in $O_\mu$. Set $O_\mu=O$.

Finally, observe that there is no  $G\subset Y$ having the Baire property such that
$G\supset\bigcup_{\ga\in\Delta} A_\ga$ and
$G\cap\bigcup_{\ga\in\Gamma\sminus\Delta}A_\ga=\emptyset$. Indeed, suppose the contrary. Then $G$, containing $\bigcup_{\ga\in\Delta}A_\ga$, must be everywhere 2nd category in $O$ and its complement $G'$, containing $\bigcup_{\ga\in\Gamma\sminus\Delta}A_\ga$, is also everywhere 2nd category in $O$. Hence, in view of  \cite[\S11,\,IV]{Kur66}, $G$ cannot have Baire property. A contradiction. In particular,
$\bigcup_{\ga\in\Delta}A_\ga$ does not have the  Baire property.
\end{proof}

\begin{Cor} Let $E$ be 2nd category subset of a 2nd countable $T_1$-space $Y$ without isolated points. Then $E$ can be written as the union of two disjoint subsets that cannot be separated by  sets having the Baire property.
\end{Cor}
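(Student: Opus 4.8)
The plan is to reduce the corollary to Theorem~\ref{prop:Lusinlemma2} by partitioning $E$ into singletons. That theorem is stated for a family of pairwise disjoint \emph{1st category} sets whose union is 2nd category, so the only point that needs checking is that, under the stated hypotheses on $Y$, each singleton $\{x\}$ is a 1st category subset of $Y$; once this is granted, the family $\{\{x\}:x\in E\}$ is precisely such a partition of $E$, and the conclusion is immediate.

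First I would verify that every singleton is nowhere dense. Since $Y$ is $T_1$, each set $\{x\}$ is closed, so its closure equals $\{x\}$ itself. Since $Y$ has no isolated points, $\{x\}$ has empty interior. Hence $\{x\}$ is a closed set with empty interior, that is, nowhere dense, and in particular of 1st category. Here both hypotheses are genuinely used: $T_1$ to make $\{x\}$ closed, and the absence of isolated points to make its interior empty.

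With this observation in hand, set $\Gamma=E$ and $A_x=\{x\}$ for $x\in E$. Then $\{A_x:x\in E\}$ is a family of pairwise disjoint 1st category subsets of the 2nd countable space $Y$ whose union is $E$, which is 2nd category by assumption. Theorem~\ref{prop:Lusinlemma2} then yields a set $\Delta\subset E$ (and an open set $O$) for which $\bigcup_{x\in\Delta}\{x\}=\Delta$ and $\bigcup_{x\in E\sminus\Delta}\{x\}=E\sminus\Delta$ are everywhere 2nd category in $O$ and, in consequence, cannot be separated by sets having the Baire property. Thus $E=\Delta\cup(E\sminus\Delta)$ is the required decomposition into two disjoint subsets.

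I expect no serious obstacle: essentially all the content is carried by Theorem~\ref{prop:Lusinlemma2}, and the only step requiring genuine attention is the elementary verification that the $T_1$ and no-isolated-point hypotheses together force singletons to be nowhere dense. It is worth remarking that dropping either hypothesis would break the reduction, since a singleton could then fail to be of 1st category.
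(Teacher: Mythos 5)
Your proof is correct and follows exactly the route of the paper: both verify that $T_1$ plus the absence of isolated points makes every singleton closed with empty interior, hence nowhere dense, and then apply Theorem~\ref{prop:Lusinlemma2} to the partition of $E$ into points. No gaps; nothing further is needed.
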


\begin{proof} A topological space is  $T_1$ if its  one-point subsets are closed. Thus, if a singleton  is not an isolated point, then it is nowhere dense, and we can apply the above Theorem for a partition of $E$ into points.
\end{proof}

According to \cite[\S40,\,II]{Kur66}, a separable metric space $Y$ is said to be an \textit{always of  1st category} space if every dense in itself subset of $Y$ is 1st category in itself. A subset in a \textit{Polish space} (i.e., a 2nd countable topological space that admits a complete metric) $Y$  is always of  1st category if\-f it is 1st category on every perfect subset of $Y$ \cite[\S40,\,II, Theorem 1]{Kur66}. Consider also the $\si$-algebra of sets having the Baire property with the restricted
sense (\cite[\S11,\,VI]{Kur66}, that is,  sets whose traces on every subset of $Y$ have the Baire property relative to that subset.
 The next result is a generalization of Theorem~\ref{th:LusinDecomposition} to partitions.

\begin{Th}\label{th:BR} Let $\{A_\ga:\ga\in\Gamma\}$ be a family of disjoint always 1st category subsets of a Polish space $Y$. If  the union $E=\bigcup_{\ga\in\Gamma}A_\ga$
is  not an always 1st category  subset of $Y$, then there exists $\Delta\subset\Gamma$ such that  $\bigcup_{\ga\in\Delta}A_\ga$ and $\bigcup_{\ga\in\Gamma\sminus\Delta}A_\ga$ cannot be separated by sets having the Baire property in the restricted sense.
\end{Th}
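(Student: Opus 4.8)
The plan is to reduce Theorem~\ref{th:BR} to Theorem~\ref{prop:Lusinlemma2} by passing to a suitable perfect set. Since $E$ is not an always 1st category subset of the Polish space $Y$, the characterization recalled above (\cite[\S40,\,II, Theorem 1]{Kur66}) tells us that $E$ fails to be 1st category on some perfect set; that is, there is a perfect $P\sbs Y$ such that $E\cap P$ is 2nd category relative to $P$. Being a subspace of a 2nd countable space, $P$ is itself 2nd countable, so it is an admissible ambient space for Theorem~\ref{prop:Lusinlemma2}.

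The decisive observation is that the hypothesis ``always 1st category'' survives restriction to $P$: for each $\ga$ the set $A_\ga$ is 1st category on \emph{every} perfect subset of $Y$, in particular on $P$, so $A_\ga\cap P$ is 1st category relative to $P$. This is precisely where the stronger assumption is used, since a merely 1st category set could be 2nd category on some perfect subset and the argument would collapse. Thus $\{A_\ga\cap P:\ga\in\Gamma\}$ is a family of disjoint 1st category subsets of the 2nd countable space $P$ whose union $E\cap P$ is 2nd category relative to $P$.

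Applying Theorem~\ref{prop:Lusinlemma2} with $P$ in the role of $Y$ then furnishes a set $\Delta\sbs\Gamma$ and a non-empty open $O\sbs P$ for which $\bigcup_{\ga\in\Delta}(A_\ga\cap P)$ and $\bigcup_{\ga\in\Gamma\sminus\Delta}(A_\ga\cap P)$ are everywhere 2nd category in $O$, and which, in consequence, cannot be separated by sets having the Baire property relative to $P$. This $\Delta$ is the one demanded by the theorem.

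The remaining step, which I expect to be the only delicate one, is to lift this non-separability back to $Y$ in the restricted sense. Suppose, to the contrary, that some $G\sbs Y$ having the Baire property in the restricted sense separated the two full unions, say $G\supset\bigcup_{\ga\in\Delta}A_\ga$ and $G\cap\bigcup_{\ga\in\Gamma\sminus\Delta}A_\ga=\emptyset$. By the very definition of the restricted Baire property (\cite[\S11,\,VI]{Kur66}), the trace $G\cap P$ has the Baire property relative to $P$. Intersecting the two relations above with $P$ gives $G\cap P\supset\bigcup_{\ga\in\Delta}(A_\ga\cap P)$ and $(G\cap P)\cap\bigcup_{\ga\in\Gamma\sminus\Delta}(A_\ga\cap P)=\emptyset$, so $G\cap P$ would separate the two restricted unions by a set with the Baire property relative to $P$, contradicting the previous step. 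Hence no such $G$ exists, which is exactly the asserted non-separability.
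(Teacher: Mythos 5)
Your proposal is correct and follows essentially the same route as the paper's own proof: pass to a perfect set $P$ on which $E$ is 2nd category, note that the always-1st-category hypothesis makes each $A_\ga\cap P$ 1st category relative to $P$, apply Theorem~\ref{prop:Lusinlemma2} inside $P$, and lift the non-separability to $Y$ via the defining property of the restricted Baire property (the trace $G\cap P$ has the Baire property relative to $P$). Your explicit remarks that $P$ inherits 2nd countability and that this is exactly where the ``always'' hypothesis is needed are sound and merely make explicit what the paper leaves implicit.
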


\begin{proof} If $E$ is not always of 1st category, then there exists a perfect subset $K$ of $Y$ such that $E\cap K$ is 2nd category relative to $K$. Consider the family $\{A_\ga \cap K:\ga\in\Gamma\}$, and note that it is a family of 1st category subsets of $K$. By applying Theorem~\ref{prop:Lusinlemma2}, we will find $\Delta\subset\Gamma$ such that  $\bigcup_{\ga\in\Delta}A_\ga\cap K$ and $\bigcup_{\ga\in\Gamma\sminus\Delta}A_\ga\cap K$ cannot be separated by   sets having the Baire property relative to $K$. Now, suppose there exists a set $C$  containing $\bigcup_{\ga\in\Delta} A_\ga$, which is disjoint with the union of $A_\ga$'s over $\Gamma\sminus\Delta$ and, moreover, has  the Baire propewrty in the resticted sense. Then $C\cap K$ contains
$\bigcup_{\ga\in\Delta}A_\ga\cap K$, is disjoint with $\bigcup_{\ga\in\Gamma\sminus\Delta}A_\ga\cap K$ and, therefore, separates these two unions on $K$. This contradicts   Theorem~\ref{prop:Lusinlemma2},   because  $C\cap K$ has the Baire property relative to $K$. \end{proof}

\begin{Cor}\label{cor:last} Let $Y$ be a Polish space without isolated points and $E\subset Y$. If $E$ is not an always 1st category set, then it can be written as the union of two disjoint sets that cannot be separated by sets having the Baire property in the restricted sense.
\end{Cor}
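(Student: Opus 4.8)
The plan is to deduce this from Theorem~\ref{th:BR} by applying it to the partition of $E$ into its one-point subsets. Concretely, I would take $\Gamma=E$ and $A_x=\{x\}$ for $x\in E$, so that $\{A_x:x\in E\}=\{\{x\}:x\in E\}$ is a family of pairwise disjoint sets with union $E$. The whole corollary should then fall out immediately, provided I first check that each singleton qualifies as an \emph{always 1st category} subset of $Y$, i.e.\ that the family really satisfies the hypotheses of Theorem~\ref{th:BR}.

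The step I would carry out first is therefore this local verification. Fix $x\in E$ and let $K$ be an arbitrary perfect subset of $Y$. Then $\{x\}\cap K$ is either empty, in which case it is trivially 1st category, or a single point; in the latter case, since $K$ is perfect it has no isolated points, so the one-point set is closed with empty interior relative to $K$, hence nowhere dense, and a fortiori 1st category, in $K$. Thus $\{x\}$ is 1st category on every perfect subset of $Y$, which, by the characterization of always 1st category sets in a Polish space recalled just before Theorem~\ref{th:BR}, says exactly that $\{x\}$ is an always 1st category set. The hypothesis that $Y$ has no isolated points is what additionally makes each singleton nowhere dense in $Y$ itself, exactly as in the proof of the corollary to Theorem~\ref{prop:Lusinlemma2}.

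With this in hand the rest is automatic. The family $\{\{x\}:x\in E\}$ consists of pairwise disjoint always 1st category subsets of $Y$, its union is $E$, and $E$ is assumed not to be an always 1st category set, so Theorem~\ref{th:BR} supplies a subset $\Delta\subset E$ for which $\bigcup_{x\in\Delta}\{x\}=\Delta$ and $\bigcup_{x\in E\sminus\Delta}\{x\}=E\sminus\Delta$ cannot be separated by sets having the Baire property in the restricted sense. Since $\Delta$ and $E\sminus\Delta$ are disjoint with union $E$, this is precisely the decomposition required.

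I do not expect any genuine obstacle, as all the real work has already been invested in Theorem~\ref{th:BR}. The only point demanding care is the claim that singletons are always of 1st category, and that is exactly where the hypothesis of no isolated points (together with the defining property of perfect sets) is used; everything else is a direct specialization of the theorem.
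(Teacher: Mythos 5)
Your proof is correct and is precisely the paper's intended argument: the paper states Corollary~\ref{cor:last} without an explicit proof because it follows from Theorem~\ref{th:BR} by exactly this specialization to the partition of $E$ into singletons, mirroring the proof the paper does give for the corollary to Theorem~\ref{prop:Lusinlemma2}. One caveat: your verification that each singleton is always 1st category uses only the perfectness of $K$ and never the absence of isolated points in $Y$, so your closing claim that the no-isolated-points hypothesis is used exactly there is inaccurate --- your argument in fact shows that hypothesis to be redundant for this corollary, in contrast to the corollary to Theorem~\ref{prop:Lusinlemma2}, where it is genuinely needed to make singletons 1st category in $Y$ itself.
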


\section{Kunugi's Theorem}

\begin{Th}Let $X$ be a topological space satisfying the condition $(\alpha)$, $Y$ a metric space, and $f:X\to Y$ a function having the Baire property. Then $f$ is continuous apart from a 1st category set.
\end{Th}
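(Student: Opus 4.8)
The plan is to produce a single first category set $M\sbs X$ off which $f$ is continuous, arguing by contradiction through condition $(\alpha)$ applied to the domain $X$. As $Y$ is metric it carries a $\si$-discrete base $\SB=\bigcup_m\SB_m$ with the members of $\SB_m$ of diameter $<1/m$, and since $\SB$ generates the topology it suffices to find a first category $M$ with $f^{-1}(B)\sminus M$ relatively open in $X\sminus M$ for every $B\in\SB$; continuity of $f|_{X\sminus M}$ then follows by taking unions. Writing $f^{-1}(B)=U_B\,\triangle\,Q_B$ with $U_B$ open and $Q_B$ first category (possible because $f$ has the Baire property), the whole obstruction sits in the possibly uncountable union $\bigcup_B Q_B$. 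To localize it I use the essential oscillation: for open $O\sbs X$ put $\operatorname{osc}^*(O)=\inf\{\operatorname{diam}f(O\sminus Q):Q\sbs X\text{ first category}\}$ and $g(x)=\inf\{\operatorname{osc}^*(O):O\ni x\ \text{open}\}$, so that $\{g<\ep\}$ is open and $D_m:=\{x:g(x)\geqs 1/m\}$ is closed. The key point is that if $\operatorname{diam}B<1/m$ then $U_B\cap D_m=\emptyset$ (from $U_B\sminus Q_B\sbs f^{-1}(B)$ one gets $\operatorname{diam}f(U_B\sminus Q_B)<1/m$), whence $f^{-1}(B)\cap D_m\sbs Q_B$ is first category.

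Next I claim every $D_m$ is first category. Suppose not. Choose a $\si$-discrete open cover of $Y$ of mesh $<1/m$; one of its discrete subfamilies $\{B_\xi:\xi<\ga\}$ must have $\bigcup_\xi\bigl(f^{-1}(B_\xi)\cap D_m\bigr)$ second category. The sets $A_\xi:=f^{-1}(B_\xi)\cap D_m$ are pairwise disjoint (as the $B_\xi$ are) and, by the paragraph above, first category, while their union is second category. Applying condition $(\alpha)$ to $X$ yields $\Delta\sbs\ga$ such that $\bigcup_{\xi\in\Delta}A_\xi$ and $\bigcup_{\xi\notin\Delta}A_\xi$ are each everywhere second category in a common open set, whence, exactly as at the end of the proof of Theorem~\ref{prop:Lusinlemma2}, they cannot be separated by a set with the Baire property. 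But $\bigcup_{\xi\in\Delta}A_\xi=f^{-1}(V)\cap D_m$ with $V=\bigcup_{\xi\in\Delta}B_\xi$ open, while $\bigcup_{\xi\notin\Delta}A_\xi\sbs f^{-1}(V')$ with $V'=\bigcup_{\xi\notin\Delta}B_\xi$; since the $B_\xi$ are disjoint, $V\cap V'=\emptyset$, so $f^{-1}(V)$ — a set with the Baire property, as $f$ has it — contains the first part and misses the second, separating them. This contradiction shows each $D_m$, and hence $D:=\bigcup_m D_m=\{x:g(x)>0\}$, is first category.

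It remains to pass from $D$ to a genuine correction. For $x\notin D$ the numbers $\operatorname{diam}f(O\sminus Q)$ can be made arbitrarily small, which pins down an essential value $f^*(x)$ (the common limit of these shrinking value-sets, a priori in the completion of $Y$), and a routine argument shows $f^*$ is continuous. Moreover, near any $x_0\notin D$ the set where $f$ differs from $f^*$ by more than $1/m$ is contained in one of the first category witnesses $Q$, so $\{x:d(f(x),f^*(x))\geqs 1/m\}$ is first category in a neighbourhood of each of its points; by the Banach category theorem it is first category, and hence so is $\{f\neq f^*\}$. Setting $M:=D\cup\{f\neq f^*\}$, a first category set, we have $f=f^*$ on $X\sminus M$, so $f|_{X\sminus M}$ is continuous (into $Y$, since it takes values in $Y$ there).

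The main obstacle is the tension in the partition fed to $(\alpha)$: its pieces must simultaneously be first category, so that $(\alpha)$ applies, and have the property that \emph{every} union of a subfamily is the $f$-preimage of an open set, so that the Baire property of $f$ forces separability and yields the contradiction. These two demands conflict for the raw correction sets $Q_B$, and the device reconciling them is the restriction to $D_m$, where preimages of small open sets are automatically first category while unions of the $A_\xi$ stay preimages of open sets. The only other delicate point is the non-metric, possibly non-separable nature of $X$ in the final step, handled not by a $\si$-discrete refinement (unavailable in $X$) but by the Banach category theorem, which converts local first-categoryness into global first-categoryness in an arbitrary space.
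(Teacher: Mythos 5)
Your proof is correct in its core and it is \emph{not} Kunugi's proof; the scaffolding around condition $(\alpha)$ is genuinely different, so let me first compare. The point of coincidence is the heart of the matter: in both arguments one manufactures a transfinite disjoint family of first category sets with second category union, applies $(\alpha)$ to split it into two parts that (by the argument ending the proof of Theorem~\ref{prop:Lusinlemma2}, i.e., \cite[\S11,\,IV]{Kur66}) cannot be separated by any set with the Baire property, and then exhibits a separating set of the form $f^{-1}(V)$, which has the Baire property because $f$ does --- contradiction. The differences are threefold. First, the covering machinery on the range: Kunugi covers the metric space $Y$ by Montgomery-style $\sigma$-uniformly-discrete families of $G_\rho$-sets (proved in the Appendix from \cite{Mon35}), so that unions of subfamilies are again $G_\rho$ and hence have preimages with the Baire property; you use a $\sigma$-discrete open cover/base (Stone--Bing), so that unions of subfamilies are \emph{open} and only the literal definition of the Baire property of $f$ is invoked. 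Second, the localization device: Kunugi introduces his condition $(P)$ and the sets $\{X_\xi\}_{II}$ where $(P)$ fails locally, and must prove the structural facts (I)--(III) plus the claim that $G\cap X_\xi$ is first category on the bad open set $G$; your essential-oscillation sets $D_m$ get the needed first-categoryness of the traces $f^{-1}(B)\cap D_m\sbs Q_B$ for free. Third, the endgame: Kunugi, having secured $(P)$ for all his sequences, takes the union $D$ of the correction sets and verifies directly that preimages of open sets are relatively open off $D$; you instead build an essential-value function $f^*$ and control $\{f\neq f^*\}$ by the Banach category theorem (a theorem which, incidentally, is also hiding inside Kunugi's step (I)). Your route avoids both the Montgomery appendix and the coherent-choice problem that condition $(P)$ addresses, at the price of the $f^*$ machinery and a detour through the completion of $Y$.

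There is, however, one genuine (though easily repaired) gap in your endgame. You define $f^*(x)$ for $x\notin D$ as ``the common limit of the shrinking value-sets $f(O\sminus Q)$,'' but such a limit need not exist: if $x$ has a first category open neighbourhood $O$, then taking $Q=O$ gives $f(O\sminus Q)=\emptyset$, so $g(x)=0$ automatically, yet the value-sets do not form a proper Cauchy filter base and pin down no value. The same defect undermines the continuity of $f^*$ and the inclusion of $\{x: d(f(x),f^*(x))\geqs 1/m\}$ in a single witness $Q$: both arguments need sets of the form $O\cap O'\sminus(Q\cup Q')$ to be nonempty, i.e., they need every open neighbourhood of the relevant point to be second category. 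The repair uses exactly the tool you already invoke: let $W$ be the union of all open first category subsets of $X$; by the Banach category theorem $W$ is open and first category, and at every point of $X\sminus(D\cup W)$ all neighbourhoods are second category, so there $f^*$ is well defined, continuous, and locally equal to $f$ off first category sets, as you argue. Taking $M=D\cup W\cup\{f\neq f^*\}$ instead of $M=D\cup\{f\neq f^*\}$ completes your proof.
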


Below is Kunugi's proof. We keep to the original and its notation closely. Although a few references to \cite[\S\S10 and 11]{Kur66} would be helpful (at that time it would be \cite{Kur33}, to which in fact Kunugi referred while giving the definition of the Baire property), the proof is totally `modern', correct, and we think -- quite impressive, taking into account the date of its discovery.

\begin{proof}
\textit{Kunugi calls a set to be of type $G_\rho$ if it is a set-theoretical difference of two open sets.  He begun his proof by stating}:

As $Y$ is a metric space, by a result of Montgomery \cite{Mon35},
it is possible to find a sequence $Y^n\, (n=1,2,3,\dots)$ such that $Y=\bigcup_{n=1}^\oo Y^n$ and each $Y^n$  decomposes into a transfinite sequence $Y^n_\xi$ of $G_\rho$-sets with distance $\rho(Y^n_\xi, Y^n_{\xi'})>\frac1n$ for $\xi\neq\xi'$. Besides, we can assume that given a base of neighborhoods  for $Y$, $Y^n_\xi$ is contained in some of those neighbourhoods whatever $n$ and $\xi$.

\textit{The above statement is correct. However, its derivation from \cite{Mon35} being not quite immediate, we provide a proof in the Appendix (Section 7 below). Kunugi continues}:

Letting $(\ep_k)$ to be a sequence of positive reals converging to $0$, we assume that the diameter of $Y^n_\xi$ is less than $\ep_k$.  Set $Y=\bigcup_{n=1}^\oo Y^n(\ep_k)$ and $Y^n(\ep_k)=\bigcup_\xi Y^n_\xi(\ep_k)$.

By our assumption, $f^{-1}(Y^n_\xi(\ep_k))$ has the Baire property. We can thus write
$$f^{-1}(Y^n_\xi(\ep_k))=(G^n_\xi(k)\sminus P^n_\xi(k))\cup Q^n_\xi(k),
$$
where $G^n_\xi(k)$ is  open,  while $P^n_\xi(k)$ and $ Q^n_\xi(k)$ are 1st category sets.

Write $D(n,k)=\bigcup_\xi (P^n_\xi(k)\cup Q^n_\xi(k))$, and assume that we can find $P^n_\xi(k)$ and $ Q^n_\xi(k)$ in such a way that $D(n,k)$ is 1st category for all $n$ and $k$. We are going to show that $f$ is continuous when one neglects the set $D=\bigcup_n\bigcup_k D(n,k)$. Indeed, let  $G$ be an open subset of $Y$. Consider
$\bigcup'Y^n_\xi(\ep_k)$, where the union $\bigcup'$  is taken over all $n,k$ such that $Y^n_\xi(\ep_k)\subset  G$. For a point $p\in G$ find a ball of radius $\ep_k$ contained in $G$. Then, there exist $n$ and $\xi$ such that $p\in Y^n_\xi(\ep_k)\subset G$. Hence $G=\bigcup' Y^n_\xi(\ep_k)$. It follows that
$
f^{-1}(G)\cap (X\sminus D)=\bigcup' G^n_{\xi(k)}\cap(X\sminus D).
$
Hence $f^{-1}(G)$ is open in $X\sminus D$.

We have shown that $f$ is continuous  apart from  1st category set $D$. It remains to prove that  it is possible to choose  the sets $P^n_\xi(k)$ and $Q^n_\xi(k)$ in such a way  that $D(n,k)$ be of 1st category.

Let us now say that a transfinite sequence of disjoint sets having the Baire property

$$
X_0,X_1, X_2\dots X_\xi\dots   \leqno{(*)}
$$
\textit{satisfies the condition} $(P)$,  if we can write $X_\xi=(G_\xi\sminus P_\xi)\cup Q_\xi$, with 1st category $P_\xi$ and $Q_\xi$, so that the union $\bigcup_\xi (P_\xi\cup Q_\xi)$ be  of 1st category. Moreover, the sequence $(*)$ \textit{satisfies the condition $(P)$ at a point $p$}, if there exists  a neighborhood $V(p)$ such that  the sequence
$$
X_0\cap V(p), X_1\cap V(p),\dots X_\xi\cap V(p)\dots
$$
satisfies the condition $(P)$. The set of all points of $X$ at which the sequence $(*)$ satisfies $(P)$  will be denoted $\{X_\xi\}_I$, and
$X\sminus \{X_\xi\}_I$ will be denoted $\{X_\xi\}_{II}$.
Now, one can see without much pain that

\begin{enumerate}
\item[\rm{(I)}]if $\{X_\xi\}_{II}=\emptyset$, the sequence $(*)$ satisfies the condition $(P)$;
\item[\rm{(II)}] if $\{X_\xi\}_{II}\neq\emptyset$, $\{X_\xi\}_{II}$ is 2nd category at each point of $\{X_\xi\}_{II}$;
\item[\rm{(III)}] $\{X_\xi\}_{II}$ is the closure of an open set.
\end{enumerate}

Suppose now $\{X_\xi\}_{II}\neq\emptyset$. By (III), there exists an open set $G$ such that $\{X_\xi\}_{II}=\overline G$. We claim that $G\cap X_\xi$ is 1st category for all $\xi$. Indeed,  otherwise $G\cap X_{\xi_0}$ is 2nd category for some $\xi_0$. Consequently, there exist $p\in G$ and a neighborhood $V(p)\subset G$ of $p$ such that $G\cap X_{\xi_0}$ is 2nd category at each point of $V(p)$.  $X_{\xi_0}$ having the Baire property, $V(p)\sminus X_{\xi_0}$ is 1st category. As $\bigcup_{\xi\neq\xi_0} X_\xi$ is
disjoint with $X_{\xi_0}$, one has $\bigcup_{\xi\neq\xi_0} V(p)\cap X_\xi\subset  V(p)\sminus X_{\xi_0}$, and therefore is 1st category. Defining $G_{\xi_0}= V(p), P_{\xi_0}=V(p)\sminus X_{\xi_0},\, Q_{\xi_0}=\emptyset$, and $G_\xi=P_\xi=\emptyset$,  $Q_\xi=V(p)\cap X_\xi$  for $\xi\neq\xi_0$,  we see that the sequence $(X_\xi\cap V(p))$ satisfies the condition $(P)$ at $p$, in contradiction with the choice of the point $p$.

It follows that if  $\{X_\xi\}_{II}\neq\emptyset$, then there  is a non-empty open set $G$ such that $(i)\ G\cap X_\xi$ is 1st category for each $\xi$; $(ii)$ $\ G\cap X_\xi$ are disjoint; $(iii)$ $ G\cap\bigcup_\xi X_\xi$ is everywhere 2nd category in $G$.
If so, our assumption allows us to break the sequence $(*)$ into two disjoint pieces: $\bigcup_{\xi'} X_{\xi'}\cap G$ and $\bigcup_{\xi''} X_{\xi''}\cap G$ in such a way that they are everywhere 2nd category in an open set.
Thus, the set  $\bigcup_{\xi'} X_{\xi'}$ does not have Baire property.

If we define $X_\xi=f^{-1}(Y^n_\xi(\ep_k))$, and if $\{X_\xi\}_{II}\neq\emptyset$, as the set  $\bigcup_{\xi'}Y^n_{\xi'}(\ep_k)$ is of type $G_\rho$, the function $f$ would  not have the Baire property, in contradiction with our hypothesis.

We see, therefore, that $\{f^{-1}(Y^n_\xi)(\ep_k)\}_{II}=\emptyset$. This means, by (I), that the transfinite sequence  $( f^{-1}(Y^n_\xi)(\ep_k))_\xi$ satisfies the condition $(P)$,  whatever $n$ and $k$ ($n,                            k=1,2,3,\dots)$.
\end{proof}

\section{Disjoint families of measure zero sets}

The second part of \cite{Lus34} deals with a measure analogue of the first part. Instead of 2nd category subsets of reals, the sets of positive outer  Lebesgue measure $m_e$ appear.  Lusin's initial reduction corresponds to  the step (1) in the proof of Theorem~\ref{th:LusinDecomposition}. He ends up with a well-ordered subset  $E=\{x_0,x_1,\dots,x_\ga,\dots\}$  of a perfect set $P$ such that $m_e(E)=m(P)<\oo$ and  every segment of $E$ is of measure zero. The definition of the `matrix' $\SE\equiv \{\SM(x,y)\}$ remained the same and   the proof, which we will now adapt for partitions, followed.

 Again, the existence of a countable base of open sets  was essential. Consequently, as in Section 3,  $Y$ \textit{is assumed to be 2nd countable}. Our measure $m$ is \textit{the completion of a finite regular Borel measure} on $Y$ and its \textit{outer measure} is denoted by $m_e$.

The passage from `points' to `disjoint measure zero sets' can  be done without much pain. As in the `Baire part', sets $B_\ga=\bigcup_{\alpha\leq\ga}A_\alpha$  for  $\ga<\Gamma$ need to be defined and, instead of  the  matrix $\SM(x,y)$, one has to consider its analogue in which points $x_\ga$ are replaced by sets $A_\ga$. Once this done, the proof below is a modification of    the  proof in \cite{Lus34}.

We add some auxiliary results in order to make the presentation  reasonably accessible.
 For the upcoming lemma see, for instance, \cite[\S\,12]{Hal50}.

\begin{Lem}\label{lem:meshull}
Let $A\subset B$ be subsets of $\-Y$, the set $B$ being measurable. The following conditions are equivalent.
\begin{enumerate}
\item[\rm{(a)}] $m_e(A)=m(B)$.
\item[\rm{(b)}] If $G$ is measurable and $G\subset B\sminus A$, then $m(G)=0$.
\end{enumerate}
\end{Lem}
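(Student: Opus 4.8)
The plan is to prove the two implications separately, leaning throughout on finiteness of $m$ (which licenses the identity $m(B\sminus G)=m(B)-m(G)$ for measurable $G\sbs B$) together with the standard existence of a measurable hull. Recall that, since $m$ is the completion of a finite regular Borel measure, its outer measure satisfies $m_e(A)=\inf\{m(M):A\sbs M,\ M\text{ measurable}\}$, and because $m$ is finite this infimum is attained: choosing measurable $M_n\supset A$ with $m(M_n)\to m_e(A)$ and setting $H=\bigcap_n M_n$ produces a measurable \emph{hull} $H\supset A$ with $m(H)=m_e(A)$. This attainment is the only non-elementary ingredient, and it is exactly what \cite[\S12]{Hal50} supplies.

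For the implication (a)$\Rightarrow$(b), I would argue directly. Assuming $m_e(A)=m(B)$, take any measurable $G\sbs B\sminus A$. Then $A\sbs B\sminus G$, and the latter is measurable, so monotonicity of $m_e$ gives $m_e(A)\leqs m(B\sminus G)=m(B)-m(G)$. Substituting $m_e(A)=m(B)$ forces $m(G)\leqs 0$, hence $m(G)=0$. No hull is needed here; only finiteness.

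For (b)$\Rightarrow$(a), first observe $m_e(A)\leqs m(B)$ because $A\sbs B$ with $B$ measurable. For the reverse inequality I would invoke a measurable hull $H$ of $A$ and reduce to the case $H\sbs B$: replacing $H$ by $H\cap B$ keeps $A\sbs H\cap B$ and, since $A\sbs H\cap B\sbs H$, preserves $m(H\cap B)=m_e(A)$. Now $G=B\sminus H$ is measurable, and $H\supset A$ gives $G\sbs B\sminus A$, so hypothesis (b) yields $m(B\sminus H)=0$. Therefore $m(B)=m(H)+m(B\sminus H)=m(H)=m_e(A)$, closing the inequality.

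The only step demanding genuine care is the attainment of the infimum defining $m_e$, that is, the existence of the measurable hull with $m(H)=m_e(A)$; this is precisely where finiteness and regularity of the measure are used, and it is the content borrowed from \cite[\S12]{Hal50}. Everything else is elementary arithmetic of a finite measure, so I expect no further obstacle.
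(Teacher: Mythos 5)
Your proof is correct, and it matches the route the paper itself takes: the paper offers no proof of this lemma, deferring to \cite[\S 12]{Hal50}, and the content of that reference is exactly the measurable-hull (measurable cover) argument you reconstruct, with the same two steps (direct arithmetic for (a)$\Rightarrow$(b), hull intersected with $B$ for (b)$\Rightarrow$(a)). One cosmetic remark: regularity of $m$ is not actually used anywhere in your argument --- the existence of a measurable hull $H$ with $m(H)=m_e(A)$ needs only $m_e(A)<\infty$, which finiteness of $m$ already guarantees.
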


A set $B$ described by the above lemma is called an ($m$-measurable)\textit{ envelope} of $A$; it is often  denoted by $\tilde A$.
Note that $m(\tilde A)=\inf\{m(C): A\subset C \}$, where $C$ run over measurable sets.

The following lemma (\cite[\S\,12(4)]{Hal50}) can be obtained as an easy  application of the notion  of  envelope.
\begin{Lem}\label{lem:sci}
 If $A_1\subset A_2\subset\dots $ is a sequence of subsets of $Y$, then $m_e(A_n)\uparrow m_e(\bigcup_{n=1}^\infty A_n)$.
\end{Lem}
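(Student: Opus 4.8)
The plan is to reduce the asserted continuity of the outer measure $m_e$ along an increasing sequence to the ordinary continuity from below of the genuine measure $m$ on its $\sigma$-algebra, the bridge being the notion of envelope. Write $A=\bigcup_{n=1}^\infty A_n$. Since the sequence $(A_n)$ is increasing, monotonicity of $m_e$ already gives $m_e(A_n)\le m_e(A_{n+1})\le m_e(A)$ for every $n$; hence the limit $\lim_n m_e(A_n)$ exists and satisfies $\lim_n m_e(A_n)\le m_e(A)$. The whole task is therefore to establish the reverse inequality $m_e(A)\le\lim_n m_e(A_n)$.

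To do this I would first choose, for each $n$, a measurable envelope $\tilde A_n$ of $A_n$, so that $A_n\subset\tilde A_n$ and $m(\tilde A_n)=m_e(A_n)$ in the sense of Lemma~\ref{lem:meshull}. These envelopes need not be nested, and the key step is to replace them by an increasing sequence. I would set $B_n=\bigcap_{k\ge n}\tilde A_k$. Each $B_n$ is measurable (a countable intersection of measurable sets), and the family $(B_n)$ is increasing because the intersection runs over a shrinking index set. Moreover $A_n\subset B_n$, since for every $k\ge n$ one has $A_n\subset A_k\subset\tilde A_k$. Sandwiching $A_n\subset B_n\subset\tilde A_n$ and applying monotonicity of $m_e$ and $m$ gives $m_e(A_n)\le m(B_n)\le m(\tilde A_n)=m_e(A_n)$, whence $m(B_n)=m_e(A_n)$; in fact $B_n$ is itself an envelope of $A_n$.

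Now $(B_n)$ is an increasing sequence of measurable sets, so continuity from below of the measure $m$ yields $m(B_n)\uparrow m(\bigcup_{n=1}^\infty B_n)$. Since $A=\bigcup_{n=1}^\infty A_n\subset\bigcup_{n=1}^\infty B_n$, monotonicity of $m_e$ gives
$m_e(A)\le m\bigl(\bigcup_{n=1}^\infty B_n\bigr)=\lim_n m(B_n)=\lim_n m_e(A_n)$,
which is exactly the inequality sought. Combined with the first paragraph, this proves $m_e(A_n)\uparrow m_e(A)$.

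The only genuinely substantive point — and the one I would flag as the main obstacle — is the passage from the possibly non-nested envelopes $\tilde A_n$ to the increasing measurable sets $B_n$; everything else is monotonicity together with the standard continuity of the honest measure $m$. I would remark that finiteness of $m$ is not strictly needed for this direction, but it guarantees that envelopes exist and keeps every quantity finite, so that no indeterminate $\infty-\infty$ situation can arise.
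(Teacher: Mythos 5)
Your proof is correct and is essentially the proof the paper has in mind: the paper gives no argument of its own, merely citing Halmos \S12(4) and remarking that the lemma ``can be obtained as an easy application of the notion of envelope,'' and your construction of the nested measurable sets $B_n=\bigcap_{k\ge n}\tilde A_k$ followed by continuity from below of $m$ is precisely that standard envelope argument. The one inessential point is your closing remark: finiteness of $m$ is indeed not needed, since if some $m_e(A_n)=\infty$ the statement is trivial, and otherwise envelopes of the $A_n$ exist even for infinite measures.
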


The next lemma is \cite[Proposition 6.1.322]{HR48} or \cite[Proposition 11.2.5]{Mun53}.

\begin{Lem}\label{lem:sep1}
Let $(A_n)$ be a sequence of subsets of $Y$. If there exist  disjoint measurable sets $B_n$  such that  $A_n\subset B_n$ for each $n\in\BN$, then  $m_e(\bigcup_{n=1}^\oo A_n)=\sum_{n=1}^\oo m_e(A_n)$.
\end{Lem}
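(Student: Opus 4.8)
The plan is to deduce everything from the notion of measurable envelope (Lemma~\ref{lem:meshull}). The inequality $m_e(\bigcup_n A_n)\leqs\sum_n m_e(A_n)$ needs no hypothesis on the $B_n$ at all: it is merely the countable subadditivity of $m_e$, which follows at once from the description $m_e(A)=\inf\{m(C):A\sbs C,\ C\text{ measurable}\}$ together with the countable subadditivity of $m$ (choose measurable $C_n\supset A_n$ with $m(C_n)\leqs m_e(A_n)+\ep/2^n$ and cover $\bigcup_n A_n$ by $\bigcup_n C_n$). So the whole content lies in the reverse inequality, and this is where the disjointness of the $B_n$ will be used.

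First I would manufacture, for each $n$, a measurable envelope $\tilde A_n$ of $A_n$ that \emph{sits inside $B_n$}. Starting from any envelope, its intersection with $B_n$ still contains $A_n$ (since $A_n\sbs B_n$), is measurable, and has measure squeezed by $m_e(A_n)\leqs m(\tilde A_n\cap B_n)\leqs m(\tilde A_n)=m_e(A_n)$; hence it is again an envelope of $A_n$, now contained in $B_n$. Because the $B_n$ are pairwise disjoint, these envelopes $\tilde A_n$ are pairwise disjoint as well.

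The key step is the claim that $\tilde E:=\bigcup_n\tilde A_n$ is an envelope of $E:=\bigcup_n A_n$, which I would verify through condition (b) of Lemma~\ref{lem:meshull}. Let $G$ be measurable with $G\sbs\tilde E\sminus E$. Using disjointness, write $G=\bigcup_n(G\cap\tilde A_n)$ as a disjoint union of measurable sets. Since $G$ misses $E\supset A_n$, each piece satisfies $G\cap\tilde A_n\sbs\tilde A_n\sminus A_n$; as $\tilde A_n$ is an envelope of $A_n$, condition (b) forces $m(G\cap\tilde A_n)=0$, whence $m(G)=\sum_n m(G\cap\tilde A_n)=0$. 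Thus $\tilde E$ satisfies (b) and is an envelope of $E$.

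Finally, invoking that $\tilde E$ is an envelope together with the countable additivity of $m$ on the disjoint measurable sets $\tilde A_n$,
$$
m_e(E)=m(\tilde E)=m\Bigl(\bigcup_n\tilde A_n\Bigr)=\sum_n m(\tilde A_n)=\sum_n m_e(A_n),
$$
which is the desired equality. The hard part will be the reverse inequality, and it rests entirely on two observations — that the envelopes can be taken inside the disjoint sets $B_n$, and that a disjoint countable union of envelopes is again an envelope — after which additivity of $m$ closes the argument. (One could instead reach the reverse inequality through the increasing partial unions and Lemma~\ref{lem:sci}, but the envelope-of-the-union route is cleaner and avoids a limiting step.)
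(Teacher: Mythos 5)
Your proof is correct and follows essentially the same route as the paper's: replace each $B_n$ by an envelope $\tilde A_n\subset B_n$, verify via condition (b) of Lemma~\ref{lem:meshull} that the disjoint union $\bigcup_n \tilde A_n$ is an envelope of $\bigcup_n A_n$, and conclude by countable additivity of $m$. The only difference is that you spell out explicitly the justification that the paper compresses into the phrase ``one can assume that $B_n=\tilde A_n$''.
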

\begin{proof}
One can assume that  $B_n=\tilde A_n$, an  envelope of $A_n$, for $n\in\BN$. We have $m_e(\bigcup_n A_n)= m(\bigcup \tilde A_n)$. Indeed, let $G$ be measurable and $G\subset (\bigcup_n\tilde A_n\sminus \bigcup_n A_n)$. Then $G\cap \tilde A_n$ is a measurable set contained in $\tilde A_n\sminus A_n$ and therefore, by Lemma~\ref{lem:meshull}, it is of measure zero. Hence $G=\bigcup_n G\cap\tilde A_n$ is also of measure zero. Applying Lemma~\ref{lem:meshull} again, $m_e(\bigcup_n A_n)=m(\bigcup_n\tilde A_n)=\sum_n m(\tilde A_n)=\sum_n m_e(A_n)$.
\end{proof}

\begin{Cor}\label{cor:sep2} Suppose $(A_n)$ is a sequence of subsets of $Y\,$that can be pairwise separated by measurable sets. Then $m_e(\bigcup_n A_n)=\sum_n m_e(A_n)$.
\end{Cor}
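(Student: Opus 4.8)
The plan is to reduce Corollary~\ref{cor:sep2} to Lemma~\ref{lem:sep1} by upgrading the hypothesis of \emph{pairwise} measurable separation to the existence of a single \emph{disjoint} family of measurable supersets. First I would pin down the meaning of the hypothesis: to say that $A_i$ and $A_j$ are separated by measurable sets is to say there is a measurable $C$ with $A_i\subset C$ and $A_j\cap C=\emptyset$ (equivalently, disjoint measurable $U,V$ with $A_i\subset U$ and $A_j\subset V$; take $U=C$, $V=Y\sminus C$). Thus for each ordered pair $(n,m)$ with $n\neq m$ I may choose a measurable set $C_{nm}$ with $A_n\subset C_{nm}$ and $A_m\cap C_{nm}=\emptyset$, and since the complement $Y\sminus C_{nm}$ separates the same pair in the opposite direction, I may arrange the consistency condition $C_{mn}=Y\sminus C_{nm}$.

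Next I would set
$$
B_n=\bigcap_{m\neq n}C_{nm}\qquad(n\in\BN).
$$
Each $B_n$ is a countable intersection of measurable sets, hence measurable, and $A_n\subset C_{nm}$ for every $m\neq n$ gives $A_n\subset B_n$. For disjointness, take $n\neq k$: then $B_n\subset C_{nk}$ while $B_k\subset C_{kn}=Y\sminus C_{nk}$, whence $B_n\cap B_k\subset C_{nk}\cap(Y\sminus C_{nk})=\emptyset$. So $(B_n)$ is a disjoint family of measurable sets with $A_n\subset B_n$ for every $n$.

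Finally, Lemma~\ref{lem:sep1} applied to $(A_n)$ with this family $(B_n)$ gives $m_e(\bigcup_n A_n)=\sum_n m_e(A_n)$, which is exactly the claim. I expect no analytic obstacle here; the only point requiring care is the combinatorial bookkeeping of the separating sets, namely the consistent choice $C_{mn}=Y\sminus C_{nm}$ that makes the diagonal intersections $B_n$ automatically pairwise disjoint. Once that is in place, the whole argument rests only on the $\sigma$-algebra closure of the measurable sets (to guarantee $B_n$ is measurable) and a single invocation of Lemma~\ref{lem:sep1}.
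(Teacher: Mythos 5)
Your proof is correct and takes essentially the same route as the paper's: both form the countable intersections $\bigcap_{m\neq n}$ of the pairwise separating sets to obtain measurable supersets of the $A_n$ and then invoke Lemma~\ref{lem:sep1}. The only difference is how disjointness is arranged: the paper disjointifies afterwards by setting $B_n=C_n\sminus\bigcup_{k<n}C_k$, whereas you build it in from the start via the complementary choice $C_{mn}=Y\sminus C_{nm}$; both steps are valid.
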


\begin{proof}
By  assumption, for each $n\neq m$, there is a measurable set $B_{nm}$ such that one has $A_n\subset B_{nm}$ and $B_{nm}\cap A_m=\emptyset$. For  $n\in\BN$, write  $C_n=\bigcap\{B_{nm}: m\in\BN, m\neq n\}$. Then, for each $n$,
 we have $$
 A_n\subset C_n \text{ and } C_n\cap A_m=\emptyset\, \text{ for  } m\neq n, \leqno{(+)}
  $$
 and $C_n$ are measurable. Define $B_n=C_n\sminus \bigcup_{k=1} ^{n-1}C_k$. It follows from $(+)$ that $B_n$'s satisfy the assumption of
Lemma~\ref{lem:sep1}.
\end{proof}

Denoting by $m\times m$ the product measure in $Y\times Y$,  we have
\begin{Lem}\label{lem:Fubini}
Let  $Z\subset Y\times Y$, and $a, b$ be positive reals.
 If $m_e(\{y:\, m_e(Z_y)>b\})>a$, then $(m\times m)_e(Z)>ab$.
\end{Lem}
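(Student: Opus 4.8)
The plan is to reduce the outer-measure statement to an honest Fubini computation by passing to a measurable envelope. Since $m$ is finite, so is the product measure $m\times m$, and $Z$ admits an $(m\times m)$-measurable envelope $\tilde Z$; by Lemma~\ref{lem:meshull} (applied in $Y\times Y$) we have $(m\times m)_e(Z)=(m\times m)(\tilde Z)$. I would then apply Fubini's theorem for the completed finite product measure to the \emph{measurable} set $\tilde Z$: this gives that the slice $\tilde Z_y$ is $m$-measurable for $m$-a.e.\ $y$, that $y\mapsto m(\tilde Z_y)$ is $m$-measurable, and that
\[
(m\times m)(\tilde Z)=\int_Y m(\tilde Z_y)\,dm(y).
\]

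The bridge from $Z$ back to $\tilde Z$ is the slice inclusion. Since $Z\sbs\tilde Z$, we have $Z_y\sbs\tilde Z_y$ for every $y$, hence $m_e(Z_y)\le m(\tilde Z_y)$ for $m$-a.e.\ $y$ (namely, for every $y$ at which $\tilde Z_y$ is measurable). Put $S=\{y:\,m_e(Z_y)>b\}$ and $T=\{y:\,m(\tilde Z_y)>b\}$, where $m(\tilde Z_y)$ is set to $0$ on the null set of non-measurable slices, so that $T$ is measurable. The difference $S\sminus T$ is contained in the $m$-null set on which either $\tilde Z_y$ fails to be measurable or $m_e(Z_y)>m(\tilde Z_y)$; thus $m_e(S\sminus T)=0$. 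By subadditivity and monotonicity of outer measure,
\[
a<m_e(S)\le m_e(S\cap T)+m_e(S\sminus T)=m_e(S\cap T)\le m(T).
\]
Finally, integrating over $T$ and using $m(\tilde Z_y)>b$ there,
\[
(m\times m)_e(Z)=\int_Y m(\tilde Z_y)\,dm(y)\ge\int_T m(\tilde Z_y)\,dm(y)\ge b\,m(T)>b\,a=ab,
\]
which is the desired strict inequality (strictness comes for free from $m(T)>a$ and $b>0$).

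The main obstacle is purely a matter of invoking the correct form of Fubini: because $m$ is the \emph{completion} of a finite regular Borel measure and $m\times m$ is its completed product, one must use the version of the theorem valid for complete ($\sigma$-finite, here finite) measures, which guarantees measurability of almost every slice and of the slice-measure function. Both this and the existence of the envelope are standard consequences of finiteness (e.g.\ in the framework of \cite{Hal50} already cited for Lemma~\ref{lem:meshull}), so once they are in hand the remaining estimates are routine. I would also remark that no measurability of $S$ or of the slices $Z_y$ is needed: the argument deliberately works through $\tilde Z$ precisely to avoid any such hypothesis, which is the point of having only a one-sided inequality rather than an equality.
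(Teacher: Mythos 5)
Your proof is correct and follows essentially the same route as the paper: both pass to a measurable envelope of $Z$ in the product space, apply Fubini to that envelope, and transfer the slice hypothesis via monotonicity ($Z_y\subset\tilde Z_y$) to get $m(\{y: m(\tilde Z_y)>b\})>a$, whence the integral exceeds $ab$. The only difference is that the paper takes the envelope measurable with respect to the uncompleted product $\sigma$-algebra, so every slice is automatically measurable and the null-set bookkeeping you carry out for the complete-measure version of Fubini is unnecessary.
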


\begin{proof}

Let $B$ be a measurable (with respect to the product $\si$-algebra) set containing $Z$ such that $(m\times m)(B)=(m\times m)_e(Z)$. We therefore have
$m(\{y:\, m(B_y)>b\})>a$. By the Fubini theorem,
$$
(m\times m)(B)=\int_{Y\times Y}1_B\,d(m\times m)(x,y)=\int_Y\left(\int_Y 1_B (x,y)\,dm(x)\right)dm(y)=\int_Y f(y)\,dm(y),
$$
\newline
where $m(\{y: f(y)>b\}>a$. So $\int_Y f(y)\,dm(y)>ab$ and
 $(m\times m)_e(Z)=(m\times m)(B)>ab$.

\end{proof}

\begin{Lem}\label{lem:sublemma} Let $Z_1$ and $Z_2$ be subsets of $Y$. Then
$
(m\times m)_e(Z_1\times Z_2)_e= m_e(Z_1)m_e(Z_2).$

\end{Lem}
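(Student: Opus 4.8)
The plan is to prove the two inequalities
$(m\times m)_e(Z_1\times Z_2)\leq m_e(Z_1)\,m_e(Z_2)$
and
$(m\times m)_e(Z_1\times Z_2)\geq m_e(Z_1)\,m_e(Z_2)$
separately, using measurable envelopes for the upper bound and Lemma~\ref{lem:Fubini} for the lower bound.

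First I would establish the upper bound. Let $\tilde Z_1$ and $\tilde Z_2$ be measurable envelopes of $Z_1$ and $Z_2$, so that $m(\tilde Z_i)=m_e(Z_i)$ in view of Lemma~\ref{lem:meshull}. Since $Z_1\times Z_2\subset\tilde Z_1\times\tilde Z_2$, and the latter is a measurable rectangle with $(m\times m)(\tilde Z_1\times\tilde Z_2)=m(\tilde Z_1)\,m(\tilde Z_2)$ by the defining property of the product measure, monotonicity of the outer measure $(m\times m)_e$ immediately gives $(m\times m)_e(Z_1\times Z_2)\leq m_e(Z_1)\,m_e(Z_2)$. In particular, if $m_e(Z_1)=0$ or $m_e(Z_2)=0$, then both sides vanish and the identity already holds; so in what follows I may assume $m_e(Z_1)>0$ and $m_e(Z_2)>0$.

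For the lower bound I would apply Lemma~\ref{lem:Fubini} to the set $Z=Z_1\times Z_2$. With the coordinate convention of that lemma, the horizontal slice $Z_y=\{x:(x,y)\in Z_1\times Z_2\}$ equals $Z_1$ when $y\in Z_2$ and is empty otherwise; hence for every $b$ with $0<b<m_e(Z_1)$ one has $\{y:m_e(Z_y)>b\}=Z_2$. Fixing arbitrary reals $0<a<m_e(Z_2)$ and $0<b<m_e(Z_1)$, we thus obtain $m_e(\{y:m_e(Z_y)>b\})=m_e(Z_2)>a$, so Lemma~\ref{lem:Fubini} yields $(m\times m)_e(Z_1\times Z_2)>ab$. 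Letting $a\uparrow m_e(Z_2)$ and $b\uparrow m_e(Z_1)$ and using $\sup\{ab:a<m_e(Z_2),\,b<m_e(Z_1)\}=m_e(Z_1)\,m_e(Z_2)$, I get the reverse inequality; combined with the upper bound, this proves the claim.

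The step I expect to require the most care is the lower bound: one must identify the slice $Z_y$ correctly relative to the convention built into Lemma~\ref{lem:Fubini}, and — crucially — verify that the ``large slice'' set is \emph{exactly} $Z_2$, so that its outer measure is genuinely $m_e(Z_2)$ and not merely bounded below by some smaller quantity. One must then pass from the strict inequalities delivered by Lemma~\ref{lem:Fubini} to the non-strict identity through the supremum argument over $a$ and $b$. By contrast, the upper bound is just the rectangle formula for product measures together with the envelope lemma, and should present no real difficulty.
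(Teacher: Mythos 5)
Your proof is correct and takes essentially the same route as the paper's: a measurable-envelope rectangle $\tilde Z_1\times\tilde Z_2$ for the upper bound, and Lemma~\ref{lem:Fubini} applied to $Z_1\times Z_2$ for the lower bound. You even supply the detail the paper leaves implicit, namely the passage from the strict inequalities $(m\times m)_e(Z_1\times Z_2)>ab$ to the non-strict bound via the supremum over $a<m_e(Z_2)$ and $b<m_e(Z_1)$, together with the degenerate case where one of the outer measures vanishes.
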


\begin{proof}If $C\subset Y\times Y$ is   measurable containing  $Z_1\times Z_2$ and $(m\times m)(C)= m_e(Z_1\times Z_2)$, then
 $m_e(Z_1)m_e(Z_2)=m(\tilde Z_1)m(\tilde Z_2)=(m\times m)(\tilde Z_1\times\tilde Z_2)\geq (m\times m)_e(Z_1\times Z_2)=(m\times m)(C)$. But $(m\times m)(C)\geq m_e(Z_1)m_e(Z_2)$ by Lemma~\ref{lem:Fubini},  and the  asserted equality follows.
\end{proof}

\begin{Th}\label{th:lusinmes}
Let $\{A_\ga:\ga\in \Gamma\}$ be a disjoint family of measure zero subsets of  a 2nd countable topological space $Y$ and   $E=\bigcup_{\ga\in\Gamma}A_\ga$. If $m_e(E)>0$, then there exists $\Delta\subset \Gamma$ such that
$\,\bigcup_{\ga\in\Delta}A_\ga$ and $\,\bigcup_{\ga\in\Gamma\sminus\Delta}A_\ga$ cannot be separated by measurable sets.
\end{Th}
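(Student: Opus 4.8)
The plan is to follow the architecture of the proof of Theorem~\ref{prop:Lusinlemma2}, replacing the category dictionary by its measure counterpart and the topological lemmas by the envelope/Fubini machinery of Lemmas~\ref{lem:meshull}--\ref{lem:sublemma}. First I would carry out the reduction of Part~1: after well-ordering the index set, pass to the least ordinal $\Gamma$ for which $m_e(\bigcup_{\ga<\Gamma}A_\ga)>0$ and replace $E$ by this union, so that every proper initial segment $B_\ga=\bigcup_{\beta<\ga}A_\beta$ is of measure zero while $m_e(E)>0$. Lemma~\ref{lem:sci} guarantees that such a $\Gamma$ exists and that outer measure cannot accumulate along any countable subsequence, so that $\Gamma$ has uncountable cofinality and the reduction is consistent. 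Put $B=\tilde E$, so $m(B)=m_e(E)>0$, and form the matrix
$$
\SE=\bigcup_{\ga<\Gamma}(A_\ga\times B_\ga)\subset Y\times Y.
$$
Two facts are then recorded: the vertical section $\SE_{x}=B_{\ga}$ (for $x\in A_\ga$) is of measure zero for every $x$, whereas the horizontal section $\SE_{y}=\bigcup_{\ga>\alpha}A_\ga$ (for $y\in A_\alpha$) is a tail of $E$ and hence has full outer measure $m_e(\SE_y)=m_e(E)=m(B)$ for every $y\in E$.

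I would then reduce the conclusion to a single clean statement: it suffices to produce $\Delta\subset\Gamma$ such that both $S=\bigcup_{\ga\in\Delta}A_\ga$ and $S'=\bigcup_{\ga\in\Gamma\sminus\Delta}A_\ga$ have full outer measure, $m_e(S)=m_e(S')=m(B)$. Indeed, since $S,S'\subset B$, Lemma~\ref{lem:meshull} then makes $B$ an envelope of each of $S$ and $S'$; if a measurable $C$ separated them, then $S\subset C$ forces $B\sminus C\subset B\sminus S$ and so $m(B\sminus C)=0$, while $S'\subset Y\sminus C$ forces $B\cap C\subset B\sminus S'$ and so $m(B\cap C)=0$, whence $m(B)=0$, a contradiction. (Corollary~\ref{cor:sep2} packages the same contradiction as a failure of additivity.) Thus the entire problem becomes the construction of a \emph{saturated} splitting into two full-outer-measure parts.

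For the construction I would mirror Part~2, reading off the two pieces from the matrix. The horizontal side is now immediate: since every $\SE_y$ already has full outer measure in $B$, no classification of the $y$'s is needed, and Lemmas~\ref{lem:Fubini} and~\ref{lem:sublemma} give $(m\times m)_e(\SE)=m_e(E)^2=m(B)^2$, so $\SE$ has full product outer measure in $B\times B$. As in Lusin's argument the splitting should then arise from a horizontal slice $(H_\nu)_{y_0}$ of the tail above a carefully chosen $y_0$ together with the saturated set $E^{(\kappa)}$ of those $x$ whose vertical section avoids a basic neighbourhood of $y_0$; the two are disjoint precisely because of that avoidance, and each is saturated because membership depends only on the index $\ga(x)$. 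Countable subadditivity of $m_e$ over the countably many labels furnished by a countable base $\{O_i\}$ replaces the Baire-category pigeonhole of Part~2 and should deliver a class of positive—hence, after passing to its envelope, full—outer measure.

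The step I expect to be the genuine obstacle is exactly the measure analogue of the avoidance in Part~2. There the slices $(H_n)_x=B^{(n)}_\ga$ were \emph{nowhere dense}, so a basic open set could be made to miss them set-theoretically, and it is this that makes $E^{(\kappa)}$ large and disjoint from the chosen tail-slice. In the measure setting a null set may be topologically dense—indeed comeagre—so in general \emph{no} decomposition of the null section $B_\ga$ into countably many nowhere-dense, hence avoidable, pieces exists, and the categorical mechanism simply breaks down. The heart of the matter is therefore to replace set-theoretic avoidance by a measure-theoretic one: using the regularity of $m$ to pass to Borel null envelopes $\tilde B_\ga$ and the countable base to localize, one must arrange the two saturated pieces to have full outer measure in one and the same measurable set $B$. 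Making this localization honest—that is, descending from the full product-measure statement for $\SE$ in $B\times B$ to a genuine one-dimensional saturated splitting with both parts full in $B$—is the only place where the measure argument is not a transparent translation of the categorical one, and is where I would concentrate the effort.
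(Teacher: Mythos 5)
Your setup (the reduction to null initial segments, the matrix $\SE$, the computation $(m\times m)_e(\SE)=m_e(E)^2$ via Lemmas \ref{lem:Fubini} and \ref{lem:sublemma}, and the remark that it suffices to defeat every measurable separator) is sound, but the proof stops exactly where the theorem begins: you never construct $\Delta$, and you say so yourself. Moreover, the plan you sketch for the construction --- pick a horizontal slice $(H_\nu)_{y_0}$ and the saturated class $E^{(\kappa)}$ of those $x$ whose vertical section avoids a basic neighbourhood of $y_0$, then pigeonhole to get a class of positive outer measure --- is not the right mechanism, first for the reason you identify (null sections cannot be ``avoided'' by open sets), and second because positive outer measure is the wrong notion of largeness: passing to the envelope of one saturated class makes it full in \emph{its own} envelope, but gives no control over the outer measure of the complementary saturated piece inside that \emph{same} measurable set. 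The common-witness property (both pieces large in one and the same set), which the category proof gets for free from the common open set $O_\mu$, is precisely what this route cannot reproduce. Your reduction to ``both pieces of full outer measure in $B=\tilde E$'' is logically sufficient but strictly stronger than the theorem, and no argument for it is offered.

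The paper resolves the difficulty by a different, purely additive mechanism requiring no choice of $y_0$ and no single large class. By regularity, each null segment $B_\ga$ is covered by basic open sets $O^\ga_1,O^\ga_2,\dots$ with $m\bigl(\bigcup_n O^\ga_n\bigr)<\ep$; setting $B^{(n)}_\ga=O^\ga_n\cap B_\ga$ and $H_n=\bigcup_\ga (A_\ga\times B^{(n)}_\ga)$, Lemmas \ref{lem:sci} and \ref{lem:Fubini} produce a finite $N$ with $(m\times m)_e(\SH)>(m_e(E)-\ep)^2$, where $\SH=\bigcup_{i\le N}H_i$. Now classify the indices $\ga$ by the finite tuple $(O^\ga_1,\dots,O^\ga_N)$: this yields countably many disjoint \emph{saturated} classes $E^k$ with $E=\bigcup_k E^k$ and, crucially, $\SH\subset\bigcup_k\bigl(E^k\times(U^k_1\cup\dots\cup U^k_N)\bigr)$ with $m(U^k_1\cup\dots\cup U^k_N)<\ep$. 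If all the $E^k$ could be pairwise separated by measurable sets, Corollary \ref{cor:sep2} would give $m_e(E)=\sum_k m_e(E^k)$, and then Lemma \ref{lem:sublemma} would force $(m\times m)_e(\SH)\le\ep\sum_k m_e(E^k)=\ep\,m_e(E)$, contradicting the lower bound once $\ep$ is small. Hence some two classes $E^i,E^l$ cannot be separated, so a fortiori $E^i$ and $\bigcup_{k\ne i}E^k$ cannot be, and $\Delta$ is read off from $E^i$. This is the idea missing from your proposal: the Baire-category pigeonhole is replaced not by a measure pigeonhole onto one large class, but by the incompatibility of countable additivity of $m_e$ over the covering classes (Corollary \ref{cor:sep2}) with the Fubini lower bound on $\SH$ --- the ``avoidance'' becomes the fact that over each class the vertical sections of $\SH$ lie in a fixed open set of measure less than $\ep$.
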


\begin{proof}
Similarly as before, we may assume without loss of generality that $\Gamma$ is an ordinal, $\ga<\Gamma$, and that for each $\ga<\Gamma$, one has  $m(\bigcup_{\beta<\ga}A_\beta)=0$.

Let $\{O_n:n\in\BN\}$ be a base of open sets in $Y$ and, for each $\ga<\Gamma$, let $B_\ga=\bigcup_{\alpha\leq\ga}A_\alpha$.
 As $m(B_\ga)=0$, we can find basic open sets  $O_n^\ga$ such that $B_\ga\subset\bigcup_{n=1}^\oo O_n^\ga$ and $m(\bigcup_{n=1}^\infty O_n^\ga)<\ep$, where $\ep>0$ and is as small as we wish.
Using the conventions of the proof of Theorem~\ref{prop:Lusinlemma2}, write
 $$
\SE=\bigcup_{x\in E}\SE_x=\bigcup_{\ga<\Gamma}\bigcup_{x\in A_\ga}(\{x\}\times B_\ga)=\bigcup_{\ga<\Gamma}\bigcup_{n=1}^\oo (A_\ga\times B^{(n)}_\ga)=\bigcup_{n=1}^\oo H_n,
$$
where $H_n=\bigcup_{\ga<\Gamma}(A_\ga\times B^{(n)}_\ga)$ and  $B^{(n)}_\ga=O_n^\ga\cap B_\ga$.

For each $y\in E$ and $\ep>0$, there exists $n\in\BN$ such that
 $$
 m_e \left((\bigcup_{i=1}^n H_i)_y\right)>m_e(E)-\ep .
 $$
Indeed, $(\bigcup_{i=1}^n H_i)_y\uparrow(\bigcup_{i=1}^\oo H_{i})_y=\SE_y$, whence
by Lemma~\ref{lem:sci},

 $$
 m_e\left((\bigcup_{i=1}^n H_i)_y\right)\uparrow m_e\left((\bigcup_{i=1}^\oo H_i)_y\right)=m_e(\SE_y)=m_e(E).
$$
 Consequently, by applying Lemma~\ref{lem:sci} again,  there exists $N\in\BN$ such that
$$
m_e\left(\{y:m_e((\bigcup_{i=1}^N H_{i})_y)>m_e(E)-\ep\}\right)>m_e(E)-\ep. \leqno{(*)}
$$
To see this, let $A^N= \{y:m_e((\bigcup_{i=1}^N H_{i})_y)>m_e(E)-\ep\}$. Clearly,  $\bigcup_{N=1}^\oo A^N=E$, and so $m_e(A^N)\uparrow m_e(E)$ with $N\to\oo$.

For $N$ just found, denote $\SH=\bigcup_{i=1}^N H_i$. From $(*)$ and
  Lemma~\ref{lem:Fubini}, we have

$$
(m\times m)_e(\SH)> (m_e(E)-\ep)^2. \leqno{(**)}
 $$
 By the definitions of  $\SH$ and   $(O_n^\ga)_{n=1}^\oo\,$,  for each $x\in A_\ga$ with $\ga<\Gamma$ one gets
$$
\SH_x\subset O_1^\ga \cup O_2^\ga\cup \dots O_N^\ga.
$$
For each sequence    $(U_1,U_2,\dots U_N)$ of basic open sets, define
$$
E^{U_1 U_2\dots U_N}=\bigcup\{A_\ga:(O_1^\ga,O_2^\ga, \dots O_N^\ga) = (U_1,U_2,\dots U_N)\}.
$$
These are disjoint sets. Ordering those that are non-empty into a sequence,
one gets$$
E^1,E^2,\dots E^k\dots
$$
Now, $E=\bigcup_k E^k$ and we claim that the sets  $E^k$ cannot be separated  by measurable sets.
Indeed, if the separation were possible,  by Corollary~\ref{cor:sep2}  one would have the equality
$$
m_e(E)=\sum_{k=1}^\oo m_e( E^k).
$$
But
$$
\SH\subset\bigcup_{k=1}^\oo (E^k\times \{U_1^k\cup U_2^k\cup\dots U_N^k\}),
$$
where $E^k=E^{U_1^k U_2^k\dots U_N^k}$.
Moreover, one has

$$
\begin{aligned}
(m\times m)_e(\SH)\leq \sum_k (m\times m)_e(E^k\times\{U_1^k\cup U_2^k\cup\dots U_N^k\})\\
=\sum_k m_e(E^k)m(\{U_1^k\cup\- U_2^k\cup\dots U_N^k\})
<\ep\,\sum_k m_e(E^k),
\end{aligned}
$$
where the equality in the middle holds by Lemma~\ref{lem:sublemma}.
Hence
$$
(m\times m)_e(\SH)\leq \ep\cdot m_e(E),
$$
which gives a contradiction with $(**)$, in view of the arbitrariness of  $\ep$.

There must exist, therefore, indices $i$ and $l$ such that  $E^i$ and $E^l$ cannot be separated by measurable sets. A fortiori, $E^i$ and $\bigcup_{k\neq i} E^k$ cannot be so separated.    Let $\Delta$ be the set of $\ga$'s  determined by $A_\ga$'s corresponding to $E^i$.
\end{proof}
In particular, if $m(\bigcup_{\ga\in\Gamma} A_\ga)>0$, then there exists $\Delta\subset\Gamma$
such that $\bigcup_{\ga\in\Delta} A_\ga$ is not measurable.

\begin{Cor}\label{cor:LNT} Let $E$ be a subset of a 2nd countable $T_1$-space $Y$ equipped with the completion of a finite regular Borel measure $m$  vanishing on points. If $m_e(E)>0$, then $E$ can be written as the union of two disjoint subsets that cannot be separated by measurable sets.
\end{Cor}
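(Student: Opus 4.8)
The plan is to deduce this corollary from Theorem~\ref{th:lusinmes} in exactly the same way the Baire-category corollary following Theorem~\ref{prop:Lusinlemma2} was obtained from that theorem, namely by taking the partition of $E$ into its singletons. First I would well-order $E=\{x_\ga:\ga\in\Gamma\}$ and set $A_\ga=\{x_\ga\}$, so that $\{A_\ga:\ga\in\Gamma\}$ is a disjoint family whose union is $E$, with $m_e(E)>0$ by hypothesis.

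The one point that needs checking is that each $A_\ga$ is a measure-zero set, and this is precisely where the two hypotheses on $Y$ and $m$ enter. Since $Y$ is $T_1$, every singleton $\{x_\ga\}$ is closed, hence Borel, hence $m$-measurable; and since $m$ vanishes on points, $m(A_\ga)=0$. Thus $\{A_\ga:\ga\in\Gamma\}$ is a disjoint family of measure-zero subsets of the 2nd countable space $Y$, exactly as demanded in the statement of Theorem~\ref{th:lusinmes}.

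With these verifications in hand, Theorem~\ref{th:lusinmes} applies directly and furnishes a set $\Delta\subset\Gamma$ such that $\bigcup_{\ga\in\Delta}A_\ga$ and $\bigcup_{\ga\in\Gamma\sminus\Delta}A_\ga$ cannot be separated by measurable sets. These two unions are disjoint, and their union is $\bigcup_{\ga\in\Gamma}A_\ga=E$, so they constitute the desired decomposition of $E$ into two disjoint subsets that cannot be separated by measurable sets.

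I do not anticipate any genuine obstacle here, as the argument is a direct specialization of Theorem~\ref{th:lusinmes}. The only subtlety worth flagging is the interplay of the two hypotheses on the ambient space: the $T_1$ axiom guarantees that singletons are Borel, so that the phrase ``vanishing on points'' is even meaningful for them, and the latter condition then forces each singleton to be null. This is the exact measure-theoretic analogue of the role played by ``$T_1$ without isolated points'' in the Baire-category corollary, where those same hypotheses instead made singletons nowhere dense.
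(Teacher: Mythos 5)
Your proposal is correct and follows essentially the same route as the paper: the paper's own proof likewise observes that singletons, being closed by the $T_1$ axiom, lie in the domain of $m$ and have measure zero since $m$ vanishes on points, and then applies Theorem~\ref{th:lusinmes} to the partition of $E$ into points. No gap to report.
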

\begin{proof} One point subsets of $Y$, being closed, are in the domain of the measure $m$ and so of measure zero. We may apply the above Theorem to the partition of $E$ into points.
\end{proof}

A subset $E$ of $Y$ is said to be \textit{universally measurable} with respect to a non-empty family  of  measures on $Y$ if it is measurable with respect to each measure $\mu$ in the family; it is  \textit{universally null} with respect to this family if $\mu(E)=0$ for each measure $\mu$ in the family.  Let $\SM$ be a  family of the completions of  regular Borel measures on  $Y$. One shows easily the following measure analogue of Theorem 3.3.

\begin{Th}\label{th:universal} Let $\{A_\ga:\ga\in\Gamma\}$ be a family of disjoint universally $\SM$-null subsets of  a 2nd countable topological space. If   $E=\bigcup_{\ga\in\Gamma}A_\ga$
is  not universally $\SM$-null, then there exists $\Delta\subset\Gamma$ such that  $\bigcup_{\ga\in\Delta}A_\ga$ and $\bigcup_{\ga\in\Gamma\sminus\Delta}A_\ga$ cannot be separated by universally $\SM$-measurable sets.
\end{Th}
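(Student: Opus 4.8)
The plan is to reduce Theorem~\ref{th:universal} to its single-measure counterpart, Theorem~\ref{th:lusinmes}, in exactly the spirit in which Theorem~\ref{th:BR} was reduced to Theorem~\ref{prop:Lusinlemma2} in Section~3. There, the passage to a perfect set $K$ on which $E$ became 2nd category localized the ``always 1st category'' hypothesis to a single category statement; here the analogous localizing move is to select one measure $\mu\in\SM$ that already witnesses the failure of universal nullity, thereby turning the universal hypothesis into the single-measure hypothesis of Theorem~\ref{th:lusinmes}.

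Concretely, I would first unwind the hypotheses. Since $E$ is not universally $\SM$-null, there is, by definition, at least one $\mu\in\SM$ for which $\mu(E)=0$ fails; as $\mu$ is the completion of a finite regular Borel measure, this says precisely $\mu_e(E)>0$. Fix such a $\mu$. For this same $\mu$ each $A_\ga$ is $\mu$-null, being universally $\SM$-null, so $\{A_\ga:\ga\in\Gamma\}$ is a disjoint family of sets of $\mu$-measure zero in the 2nd countable space $Y$ whose union $E$ satisfies $\mu_e(E)>0$. I would then apply Theorem~\ref{th:lusinmes} verbatim with $m=\mu$, obtaining $\Delta\subset\Gamma$ such that $\bigcup_{\ga\in\Delta}A_\ga$ and $\bigcup_{\ga\in\Gamma\sminus\Delta}A_\ga$ cannot be separated by $\mu$-measurable sets.

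The last step lifts this non-separability from the single $\mu$ to the whole family. Suppose toward a contradiction that some universally $\SM$-measurable set $C$ separates the two unions, say $C\supset\bigcup_{\ga\in\Delta}A_\ga$ and $C\cap\bigcup_{\ga\in\Gamma\sminus\Delta}A_\ga=\emptyset$. By the definition of universal measurability, $C$ is in particular $\mu$-measurable, so $C$ would be a $\mu$-measurable separator of the two unions, contradicting the conclusion just drawn from Theorem~\ref{th:lusinmes}. (Complementation causes no trouble, since $\mu$-measurability, and hence universal $\SM$-measurability, is preserved under complements, so the two directions of ``separation'' are equivalent.) This establishes the theorem, and in particular shows $\bigcup_{\ga\in\Delta}A_\ga$ cannot itself be universally $\SM$-measurable.

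I expect no genuine obstacle: all the real work sits in Theorem~\ref{th:lusinmes}, and the present statement merely repackages it by choosing a witnessing measure and observing that universal $\SM$-measurability is stronger than $\mu$-measurability for each individual $\mu$. The only points deserving a line of care are the correct reading of ``not universally $\SM$-null'' as the \emph{existence} of a single $\mu$ with $\mu_e(E)>0$ rather than some simultaneous condition, and the implicit standing assumption, consistent with Section~5, that each member of $\SM$ is the completion of a \emph{finite} regular Borel measure, which is exactly what makes Theorem~\ref{th:lusinmes} directly applicable.
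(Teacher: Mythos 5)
Your proposal is correct and takes essentially the approach the paper intends: the paper dismisses Theorem~\ref{th:universal} as an easy measure analogue of Theorem~\ref{th:BR}, and the natural analogue of that proof is exactly your reduction---replace the witnessing perfect set $K$ by a witnessing measure $\mu\in\SM$ with $\mu_e(E)>0$, apply Theorem~\ref{th:lusinmes} with $m=\mu$ to obtain $\Delta$, and observe that any universally $\SM$-measurable separator would in particular be a $\mu$-measurable separator, a contradiction. Your two points of care (reading ``not universally $\SM$-null'' existentially, and the finiteness of the measures in $\SM$) are exactly the right ones to flag.
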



Denote by $\SN$ the  family of the completions of regular Borel measures, defined on a topological $T_1$-space, such that each measure  vanishes on points .

\begin{Cor} Let $Y$ be a 2nd countable topological $T_1$-space and let $E$ be a subset of  $Y$ that is not  universally $\SN$-null. Then it can be written as the union of its two disjoint subsets that cannot be separated by universally $\SN$-measurable sets.

\end{Cor}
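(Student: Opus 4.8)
The plan is to derive this corollary from Theorem~\ref{th:universal} in exactly the same way Corollary~\ref{cor:LNT} is derived from Theorem~\ref{th:lusinmes}: partition $E$ into its one-point subsets and apply the theorem with $\SM=\SN$.

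First I would verify that every singleton contained in $Y$ is universally $\SN$-null. Since $Y$ is $T_1$, each one-point set is closed, hence Borel, and therefore lies in the domain of every completed regular Borel measure $\mu$ belonging to $\SN$. By the defining property of the family $\SN$, each such $\mu$ vanishes on points, so $\mu(\{y\})=0$ for all $y\in Y$ and all $\mu\in\SN$; that is, every singleton is universally $\SN$-null.

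Next I would observe that $\{\{y\}:y\in E\}$ is a disjoint family of universally $\SN$-null subsets of $Y$ whose union is $E$. By hypothesis $E$ is \emph{not} universally $\SN$-null, so the hypotheses of Theorem~\ref{th:universal} are satisfied, with the index set $\Gamma$ taken to be $E$ itself and $A_y=\{y\}$. The theorem then yields a subset $\Delta\subset E$ for which $\bigcup_{y\in\Delta}\{y\}=\Delta$ and $\bigcup_{y\in E\sminus\Delta}\{y\}=E\sminus\Delta$ cannot be separated by universally $\SN$-measurable sets. Taking these two pieces as the required disjoint subsets of $E$ finishes the argument.

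The only point needing care is the verification that singletons are universally $\SN$-null, and this is immediate once the $T_1$ hypothesis is combined with the fact that the measures in $\SN$ vanish on points. Beyond that there is no real obstacle: the statement is a one-line specialization of Theorem~\ref{th:universal} to the partition into points, the universal counterpart of the passage already carried out in Corollary~\ref{cor:LNT}.
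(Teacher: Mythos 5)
Your proof is correct and follows exactly the argument the paper intends: the corollary is the specialization of Theorem~\ref{th:universal} (with $\SM=\SN$) to the partition of $E$ into singletons, where the $T_1$ hypothesis guarantees each singleton is closed, hence measurable, and the vanishing-on-points condition makes it universally $\SN$-null. This mirrors the paper's own proof of Corollary~\ref{cor:LNT}, so nothing further is needed.
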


\ The CRAS note \cite{Lus34} (S\'eance de 23 avril) was preceded by two other notes by Lusin, \cite{Lus34a} (S\'eance de 19 mars) and \cite{Lus34b} (S\'eance de 26 mars). In those notes  Lusin discussed a new method of solving four `difficiles' (difficult) problems `de la th\'eorie des fonctions' (of the theory of functions). In \cite{Lus34b}, he thought he had solved Problems 1 and 2 on the list, by showing that \textit{every uncountable subset of reals can be decomposed into two disjoint subsets that cannot be separated by Borel sets}\footnote{ It is now known that the result is impossible in ZFC. Indeed, it is consistent \cite{Mar70} that there exists a so called Q-set $E$ which is uncountable, and whose every subset is a relative $G_\delta$ set. In particular, every subset of $E$ is the trace of a Borel subset of real line.}. He mentioned in  a footnote that  Pyotr S. Novikov   also believed having a solution, and indicated that he did not know Novikov's proof.  In the introductory Section 1 of the note \cite{Lus34} that we study in this paper,  Lusin admitted an error in both proofs -- his and Novikov's. He wrote (that in this situation) ``il me semble utile de d\'evelopper compl\`etement la d\'emonstration dans ces cas importants o\`u'' (it seems useful to me  to give a full proof in those important cases when)\textit{ the set is not always 1st category or is not of measure zero}. The next sentence  is: ``Je suivrai de pr\`es la mani\`ere de Monsieur P. Novikoff." In English: ``I will follow the technique of Mr. P. Novikov closely." Thus, it appears that Lusin was able to present proofs of Theorem~\ref{th:LusinDecomposition}
 and  Corollary~\ref{cor:LNT} using the technique that he learned in the meantime from Novikov. For this reason, we
attribute Theorem~\ref{th:LusinDecomposition} together with
Corollary~\ref{cor:LNT} to Lusin \textit{and} Novikov.
And the dual statement of Theorem 3.1 combined with Theorem 5.6  will be referred to as  \textit{the Lusin-Novikov Theorem for Partitions (LNTP)}.

\section{Prikry's manuscript}

As phrased in the Introduction, the statement of Proposition~\ref{th:partition} simplifies the discussion of the early research  triggered by Kuratowski's paper. However, the original formulation in \cite[Theorem 2]{Kur76} is different. That theorem is stated for a partition of an arbitrary 2nd category \textit{subset} of a  Polish space.   As  Theorem~\ref{prop:Lusinlemma2} is  also stated in that generality (Polish space being replaced by a 2nd countable topological   space), it looked, at least on the face of it,  like LNTP was the first  result which openly subsumed Kuratowski's theorem.
Then, we focused on  Prikry's manuscript again. The proof there, although originally applied in the case of $X=[0,1]$, is exceptional for its generality.
Furthermore,  there is a curious passage before Prikry embarks on his statement and proof. He writes that the result is valid, in a stronger form, for any subset $E\subset [0,1]$ whose outer measure is positive or if $E$ is of 2nd category. Namely,  what he claims, amounts in the case of partitions to the non-existence  of the separation by measurable sets or by sets having the Baire property in the sense of the LNTP. However, as he writes it, he restraints himself from going into this more general situation ``in order to avoid extra explanation''.

In conclusion,  Prikry actually proved (compare Corollaries 4.1 and 4.2 in \cite{Kou86})

\begin{Prop}\label{prop:Prikry1}
 Let $Y$ be a 2nd countable topological space equipped with a  finite regular Borel measure $\mu$, and let $\{A_\ga :\ga\in\Gamma\}$  be a point finite family of subsets of   $\,Y$ with union $E=\bigcup_{\ga\in\Gamma}A_\ga$. Suppose that for any $\ga\in\Gamma$, $\mu(A_\ga)=0$ (resp., $A_\ga$ is 1st category) and $\mu_e(E)>0$ (resp., $E$ is 2nd category). Then there exists $\Theta\subset\Gamma$ such that $\,\bigcup_{\ga\in\Theta}A_\ga$ is not measurable (resp., does not have the Baire property).
\end{Prop}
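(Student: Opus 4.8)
The plan is to prove both variants by contradiction, adapting the product-space arguments of Theorems~\ref{th:lusinmes} and \ref{prop:Lusinlemma2} from disjoint to merely point finite families. The decisive structural observation is that those constructions classify the \emph{indices} $\ga$, not the points: working in $Y\times Y$ with $B_\ga=\bigcup_{\alpha\le\ga}A_\alpha$ and the matrix $\SE=\bigcup_{\ga<\Gamma}(A_\ga\times B_\ga)$, the pieces produced at the end have the form $E^k=\bigcup\{A_\ga:\operatorname{sig}(\ga)=k\}=\bigcup_{\ga\in\Theta_k}A_\ga$, where $\operatorname{sig}(\ga)$ records the first $N$ members of a fixed basic cover of $B_\ga$. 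Hence each $E^k$ is a union of \emph{whole} original sets, the classes $\{\Theta_k\}$ partition $\Gamma$, and a single non-measurable (resp.\ non-$BP$) $E^k$ is exactly the $\bigcup_{\ga\in\Theta}A_\ga$ we want. As in Part~1 of the proof of Theorem~\ref{th:LusinDecomposition}, I would first pass to an ordinal $\Gamma$ for which every proper initial segment $\bigcup_{\beta<\ga}A_\beta$ is null (resp.\ meager); completeness of $m$ makes these segments measurable, and point-finiteness is inherited by the subfamily.

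The genuinely new ingredient, forced by the absence of disjointness, is a reduction to bounded multiplicity. Writing $d(x)=\#\{\ga\in\Gamma:x\in A_\ga\}<\oo$, one has $E=\bigcup_{j\ge 1}\{x:d(x)\le j\}$, an increasing union, so by Lemma~\ref{lem:sci} (resp.\ by the Baire theorem) some truncation $E_0=\{x\in E:d(x)\le n_0\}$ satisfies $m_e(E_0)>0$ (resp.\ is 2nd category). The point of $n_0$ is to control the overcounting created by the overlaps of the classes $E^k$: on $E_0$ a point lies in at most $d(x)\le n_0$ of them, since each $\ga$ with $x\in A_\ga$ falls in exactly one $\Theta_k$.

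For the measure case I assume that every $\bigcup_{\ga\in\Theta}A_\ga$ is measurable and run the construction inside a measurable envelope $\tilde E_0$ of $E_0$ (Lemma~\ref{lem:meshull}), using the family $\{A_\ga\cap\tilde E_0\}$, for which $m_e(E\cap\tilde E_0)=m_e(E_0)=m(\tilde E_0)$. Under the assumption each $E^k$ is measurable, so $c=\sum_k 1_{E^k\cap\tilde E_0}$ is a measurable function with $c\le d$; because $\{c>n_0\}\cap\tilde E_0\subseteq\tilde E_0\sminus E_0$ carries no measurable set of positive measure, one gets $\sum_k m(E^k\cap\tilde E_0)=\int_{\tilde E_0}c\,dm\le n_0\,m(\tilde E_0)$. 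Meanwhile the horizontal sections $\SE_y=E\sminus B_{<\phi(y)}$ still have full outer measure $m_e(E_0)$ in $\tilde E_0$, so for large $N$ and $\SH=\bigcup_{i\le N}H_i$ Lemma~\ref{lem:Fubini} yields $(m\times m)_e(\SH)>(m_e(E_0)-\ep)^2$, exactly as in $(**)$ of Theorem~\ref{th:lusinmes}; and the covering $\SH\subseteq\bigcup_k(E^k\times W_k)$ with $m(W_k)<\ep$, together with Lemma~\ref{lem:sublemma}, gives $(m\times m)_e(\SH)\le\ep\sum_k m(E^k\cap\tilde E_0)\le\ep\,n_0\,m(\tilde E_0)$. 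For $\ep$ small these contradict each other, so some $\bigcup_{\ga\in\Theta}A_\ga$ is non-measurable.

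The category case I would run in parallel on the combinatorial construction of Theorem~\ref{prop:Lusinlemma2}, with the nowhere dense pieces $B_\ga^{(n)}$ of the meager sets $B_\ga$ replacing the small open covers; bounded multiplicity now enters through the vertical sections, since for $d(x)\le n_0$ the section $\SH_x=\bigcup_{i\le N}\bigcup_{\ga\ni x}B_\ga^{(i)}$ is a \emph{finite} union of nowhere dense sets, hence nowhere dense, while the horizontal sections stay everywhere 2nd category on a fixed open set (the Kuratowski--Ulam theorem \cite{Oxt80} here plays the role of Fubini). I expect the main obstacle to be exactly this loss of disjointness: in the disjoint theorems the two classes produced at the end are automatically disjoint, whereas for a point finite family they can meet on $\{x:d(x)\ge 2\}$. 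The bounded-multiplicity truncation is what converts this overlap into the harmless factor $n_0$ in the measure estimate and into a finite (hence nowhere dense) union of section-pieces in the category estimate; the delicate points are getting the envelope bookkeeping to survive the passage to $\tilde E_0$ and, in the category argument, absorbing the residual overlap of the two index-classes into a meager set so that one of them is everywhere 2nd category together with its complement, and thus lacks the Baire property.
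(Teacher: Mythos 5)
Two things up front: the paper itself contains no proof of Proposition~\ref{prop:Prikry1} --- it defers to Prikry's folklore argument as written up in \cite[Proof of Lemma 3.2]{Lab17} --- so your route through the matrix constructions of Theorems~\ref{prop:Lusinlemma2} and \ref{th:lusinmes} is necessarily a different one; and your measure half is, as far as I can check, correct. In the proof of Theorem~\ref{th:lusinmes} the final classes really are indexed by the signature $(O_1^\ga,\dots,O_N^\ga)$, hence are unions of whole $A_\ga$'s, and this survives point-finiteness untouched. Your truncation $E_0=\{x\in E: d(x)\le n_0\}$, the envelope estimate ($\{c>n_0\}\cap\tilde E_0$ is a measurable subset of $\tilde E_0\sminus E_0$, hence null by Lemma~\ref{lem:meshull}), and the resulting bound $\sum_k m(E^k\cap\tilde E_0)\le n_0\,m(\tilde E_0)$ are the right replacement for the disjointness-based Corollary~\ref{cor:sep2}; the two estimates on $(m\times m)_e(\SH)$ then collide as soon as $\ep(n_0+2)<m(\tilde E_0)$.

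The genuine gap is the category half, and it sits precisely at your ``decisive structural observation'', which is false for the category construction. The proof of Theorem~\ref{prop:Lusinlemma2} has no signatures: its final pieces $E^{(\kappa)}=\{x\in E: O_\kappa\cap(H_\nu)_x=\emptyset\}$ and $(H_\nu)_{y_0}$ are classes of \emph{points}, and the saturation of $E^{(\kappa)}$ is proved there by the remark that the vertical section $(H_\nu)_x$ is determined by the unique $A_\ga$ containing $x$ --- this is exactly where disjointness enters. For a point-finite family $(H_\nu)_x=\bigcup\{B^{(\nu)}_\ga: x\in A_\ga\}$ depends on the whole finite index set of $x$, so $E^{(\kappa)}$ need not be of the form $\bigcup_{\ga\in\Theta}A_\ga$ at all (two points of one $A_\ga$ can carry different extra indices and land on opposite sides). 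Nor can signatures be imported from the measure proof: the pigeonhole there rests on enclosing each null $B_\ga$ in an $\ep$-small union of basic open sets, and meager sets admit no analogous countable family of small containers (if every $B_\ga$ were contained in one of countably many meager $W_k$, then $\SE\subset Y\times\bigcup_k W_k$ would be meager outright, contradicting the non-meagerness of its horizontal sections with no appeal to any Baire-property hypothesis); this is why Kuratowski--Ulam cannot play the role you assign to Fubini. Your proposed repair --- absorbing the overlap of the two index classes into a meager set --- also fails: for the classes $\Delta'$ and $\Gamma\sminus\Delta'$ the overlap $\bigl(\bigcup_{\ga\in\Delta'}A_\ga\bigr)\cap\bigl(\bigcup_{\ga\in\Gamma\sminus\Delta'}A_\ga\bigr)$ has no reason to be meager.

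The gap can be closed, but by a different observation: only \emph{one} of the two pieces has to be an index class, and the horizontal one automatically is. For point-finite families one still has $(H_\nu)_{y_0}=\bigcup\{A_\ga: y_0\in B^{(\nu)}_\ga\}=\bigcup_{\ga\in\Delta'}A_\ga$, everywhere 2nd category in $O_\mu$; and $E^{(\kappa)}$, saturated or not, is disjoint from it (if $x\in A_\ga$ with $y_0\in B^{(\nu)}_\ga$, then $y_0\in O_\kappa\cap (H_\nu)_x\neq\emptyset$, so $x\notin E^{(\kappa)}$) and is likewise everywhere 2nd category in $O_\mu$. Here point-finiteness alone --- no $n_0$-truncation is needed in this half --- keeps every vertical section a finite, hence nowhere dense, union, which is what keeps $E=\bigcup_k E^{(k)}$ and the choice of $\kappa$ and $O_\mu$ alive. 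Thus $\bigcup_{\ga\in\Delta'}A_\ga$ and its complement in $Y$, which contains $E^{(\kappa)}$, are both everywhere 2nd category in $O_\mu$, and by \cite[\S11,\,IV]{Kur66} the set $\bigcup_{\ga\in\Delta'}A_\ga$ does not have the Baire property; take $\Theta=\Delta'$. With this substitution your outline becomes a complete proof; as written, the category case is not proved.
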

We recall  that $\{A_\ga: \ga\in\Gamma\}$ is said to be \textit{point finite}, if  each point $x$ in $E=\bigcup_{\ga\in\Gamma} A_\ga$ belongs to at most finite number of $A_\ga$'s. Prikry's proof of Proposition \ref{prop:Prikry1} remained in the folklore; it can now be found in \cite[Proof of Lemma 3.2]{Lab17}. Note that although Lemma 3.2 in \cite{Lab17} is stated in a weaker form, its proof gives Propositon~\ref{prop:Prikry1}. An additional hint is that it is better to carry on the proof  with respect to the whole space $Y$ instead of simplifying the notation by taking $\beta=\Gamma$.

\medskip
With the  assumptions of the above Proposition~\ref{prop:Prikry1}, here is the stronger statement claimed by Prikry without proof.

\begin{Th}\label{th:Prikry2} There exists $\Theta\subset\Gamma$ such that there is no $m$-measurable (resp., having the Baire property) subset $C$ of $\,Y$ such that   $C\cap E=\bigcup_{\ga\in\Theta}A_\ga$.
\end{Th}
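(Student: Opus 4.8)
The plan is to argue by contradiction. Assume the conclusion fails, i.e. that for \emph{every} $\Theta\sbs\Gamma$ there is an $m$-measurable $C$ with $C\cap E=\bigcup_{\ga\in\Theta}A_\ga=:P_\Theta$; I will say such a $P_\Theta$ is a \emph{trace}. The idea is that this trace hypothesis is exactly what allows the merely point finite family to be handled, after one lift, like the disjoint family in Theorem~\ref{th:lusinmes}, and to recover a contradiction from that proof's Fubini estimates. Two harmless reductions come first. As in Theorem~\ref{th:lusinmes} I may assume $\Gamma$ is an ordinal with $m(\bigcup_{\beta<\ga}A_\beta)=0$ for all $\ga<\Gamma$. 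Writing $\phi(x)=\{\ga:x\in A_\ga\}$ (finite and nonempty by point finiteness), I may also restrict to $E_m=\{x\in E:|\phi(x)|\leq m\}$ for $m$ so large that $m_e(E_m)>0$, giving a family of uniformly bounded multiplicity; since $E_m\sbs E$, a witness $C\cap E=P_\Theta$ would restrict to $C\cap E_m=P_\Theta\cap E_m$, so the conclusion for the restricted family transfers back.

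Next I would reproduce the construction of Theorem~\ref{th:lusinmes}. Put $B_\ga=\bigcup_{\alpha\leq\ga}A_\alpha$, choose basic open covers $B_\ga\sbs\bigcup_n O^\ga_n$ with $m(\bigcup_n O^\ga_n)<\ep$, and form $\SE=\bigcup_\ga(A_\ga\times B_\ga)=\bigcup_n H_n$ with $H_n=\bigcup_\ga(A_\ga\times B^{(n)}_\ga)$, $B^{(n)}_\ga=O^\ga_n\cap B_\ga$. The crucial slice computations survive point finiteness: the vertical slice $\SE_x=\bigcup_{\ga\in\phi(x)}B_\ga=B_{\max\phi(x)}$ is null, while for $y\in E$ the horizontal slice $\SE_y=\bigcup\{A_\ga:\ga\geq\min\phi(y)\}$ has $m_e(\SE_y)=m_e(E)$ because $E\sminus\SE_y\sbs\bigcup_{\ga<\min\phi(y)}A_\ga$ is null. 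Hence, just as there, Lemma~\ref{lem:sci} produces an $N$ with $m_e(\{y:m_e(\SH_y)>m_e(E)-\ep\})>m_e(E)-\ep$ for $\SH=\bigcup_{i\leq N}H_i=\bigcup_\ga(A_\ga\times V_\ga)$, $V_\ga=\bigcup_{i\leq N}(O^\ga_i\cap B_\ga)$, and Lemma~\ref{lem:Fubini} gives the lower bound $(m\times m)_e(\SH)>(m_e(E)-\ep)^2$.

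The decisive step will be the matching \emph{upper} bound, and this is where both the trace hypothesis and the bounded multiplicity are spent. Classify indices by the tuple $\tau(\ga)=(O^\ga_1,\dots,O^\ga_N)$; there are only countably many tuples, and each $\Gamma_\tau=\{\ga:\tau(\ga)=\tau\}$ yields the \emph{saturated} set $P_{\Gamma_\tau}$, which by hypothesis is a trace. So I fix $m$-measurable $C_\tau\sbs\tilde E$ (an envelope of $E$, Lemma~\ref{lem:meshull}) with $C_\tau\cap E=P_{\Gamma_\tau}$, and set $W_\tau=O^\tau_1\cup\dots\cup O^\tau_N$, so $m(W_\tau)<\ep$. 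Since $A_\ga\sbs P_{\Gamma_{\tau(\ga)}}\sbs C_{\tau(\ga)}$ and $V_\ga\sbs W_{\tau(\ga)}$, one gets $\SH\sbs\mathcal B:=\bigcup_\tau(C_\tau\times W_\tau)$, a countable union of measurable rectangles. Applying Fubini to $\mathcal B$: for $x\in E$ one has $x\in C_\tau\iff\tau\in T(x):=\{\tau(\ga):\ga\in\phi(x)\}$, so $\mathcal B_x=\bigcup_{\tau\in T(x)}W_\tau$ has $m(\mathcal B_x)<|T(x)|\,\ep\leq m\,\ep$, while $\mathcal B_x=\emptyset$ off $\tilde E$. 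Thus $\{x:m(\mathcal B_x)\geq m\ep\}$ is measurable and contained in $\tilde E\sminus E$, hence null by Lemma~\ref{lem:meshull}. Therefore $(m\times m)_e(\SH)\leq(m\times m)(\mathcal B)=\int_{\tilde E}m(\mathcal B_x)\,dm(x)\leq m\,\ep\,m(\tilde E)=m\,\ep\,m_e(E)$, and with the lower bound this forces $(m_e(E)-\ep)^2<m\,\ep\,m_e(E)$, which is absurd once $\ep$ is small and $m$ is fixed.

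I expect this upper bound to be the genuine obstacle. For a non-disjoint family the saturated classes $P_{\Gamma_\tau}$ overlap, so Corollary~\ref{cor:sep2} (additivity of $m_e$) is unavailable, and one cannot bound $(m\times m)_e(\SH)$ through its sections directly, since outer measure is not controlled slicewise. The device above removes both difficulties at once: the trace hypothesis replaces each overlapping $P_{\Gamma_\tau}$ by a true measurable $C_\tau$, turning the cover into a product-measurable $\mathcal B$ on which honest Fubini is legal; bounded multiplicity keeps every section $\mathcal B_x$ with $x\in E$ of measure $<m\ep$; and the envelope relation (every measurable subset of $\tilde E\sminus E$ is null) discards the sections over $\tilde E\sminus E$, where $C_\tau$ is uncontrolled. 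The Baire-category half is entirely parallel: one runs the construction of Theorem~\ref{prop:Lusinlemma2} instead, replacing the measurable envelope and $m\times m$-Fubini by their category counterparts (the open hull modulo meager sets and the Kuratowski--Ulam theorem) and the null set $\tilde E\sminus E$ by a meager remainder, so that the identical bookkeeping yields the contradiction.
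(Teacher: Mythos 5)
Your measure half is correct, and it is a genuinely different proof from the one in the paper. The paper does not redo any combinatorics: it deduces Theorem~\ref{th:Prikry2} from Proposition~\ref{prop:Prikry1} (whose proof it only cites), in the measure case by restricting the outer measure $m_e$ to the Borel sets of the subspace $E$, applying Proposition~\ref{prop:Prikry1} with $Y=E$, and then identifying the traces of $m$-measurable sets on $E$ via $\SM\cap E=\SB_E\vartriangle\SN^\mu$. You instead prove the measure statement from scratch by negating it and rerunning the Fubini argument of Theorem~\ref{th:lusinmes}; the key new ideas --- that the classes $\Gamma_\tau$ cut out by $N$-tuples of basic covering sets are saturated by construction, that the trace hypothesis converts each overlapping union $P_{\Gamma_\tau}$ into an honest measurable set $C_\tau\sbs\tilde E$, and that the envelope property of Lemma~\ref{lem:meshull} disposes of the uncontrolled sections over $\tilde E\sminus E$ --- are sound, as are your two reductions (ordinal indexing with null initial segments, and bounded multiplicity via Lemma~\ref{lem:sci}), both of which visibly preserve the trace hypothesis. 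The payoff of your route is self-containedness: the point-finite measure case no longer rests on Proposition~\ref{prop:Prikry1}, which this paper quotes but does not prove.

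The Baire half, however, is a genuine gap, not a routine translation. Your argument's engine is quantitative: each null $B_\ga$ is covered by an open set of measure $<\ep$, the countable classification is by $N$-tuples of such basic sets, and the contradiction is the numerical inequality $(m_e(E)-\ep)^2<m\,\ep\,m_e(E)$ as $\ep\to 0$. None of this has a category counterpart. A meager set need not admit any open cover that is ``small'' in a category sense --- a countable dense set is meager, yet every open set covering it is dense open --- so there is no analogue of the $\ep$-covers, no countable tuple classification, and no parameter to send to zero; the Kuratowski--Ulam theorem gives only the qualitative meager/nonmeager dichotomy and cannot replace the inequality. If you instead try to run the paper's actual category construction (Theorem~\ref{prop:Lusinlemma2}) for a point-finite family, it breaks precisely at the saturation step: $E^{(\kappa)}=\{x: O_\kappa\cap(H_\nu)_x=\emptyset\}$ is no longer saturated, because $(H_\nu)_x=\bigcup_{\ga\in\phi(x)}B^{(\nu)}_\ga$ depends on the whole finite set $\phi(x)$; and the tuple trick cannot restore saturation, since the data attached to an index $\ga$ is now a sequence of nowhere dense sets, of which there are uncountably many --- there is no countable family of meager sets containing every $B_\ga$. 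This is why the paper's proof of the Baire case goes another way entirely: it adjoins a countable dense set $Q$, passes to the dense subspace $E_1=E\cup Q$ (where the $A_\ga$ remain meager and traces of sets with the Baire property retain the relative Baire property), and applies Proposition~\ref{prop:Prikry1} there. To complete your proposal you would either need such a reduction, or a genuinely new category argument; ``identical bookkeeping'' is not one.
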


We will prove it be reducing Theorem~\ref{th:Prikry2}.
 to Propositon~\ref{prop:Prikry1}.

\textit{1. The measure case}.

For the purpose of the proof, it will be convenient to denote the space in the theorem by $X$ (keeping its measure to be $m)$. With this setting,  denote by $\SB$  the $\si$-algebra of Borel subsets of $X$, by $\SB_E$ its restriction to $E$ (i.e., the Borel $\si$-algebra of the subspace $E$ of $X$), and let $\mu$ be defined as the restriction of $m_e$, the outer measure of $m$, to $\SB_E$, that is, $\mu(A)= m_e(A)$ for $A\in \SB_E$.
 Then $\mu$ is a regular Borel measure on $E$ and $\mu(A_\ga)=0$ for each $\ga\in\Gamma$.
Further, define $\SN^\mu=\{Z\subset E: \mu_e(Z)=0\}$ and $\SN^m=\{Z\subset X: m_e(Z)=0\}$.

   By applying  Proposition~\ref{prop:Prikry1} with $Y=E$  and the measure $\mu$, we infer the existence of $\Theta\subset \Gamma$ such that  $\bigcup_{\ga\in\Theta} A_\ga$ is not $\mu$-measurable, i.e., is not a member of the $\mu$-completion of $\SB_E$. Thus, we have $\bigcup_{\ga\in\Theta} A_\ga\not\in \SB_E\vartriangle\SN^\mu$, where $\vartriangle$ stands for the symmetric difference.
Now, denote by $\SM$ the family of $m$-measurable sets in $X$ and observe that
$$
\SM\cap E=(\SB\vartriangle \SN^m)\cap E=\SB_E\vartriangle\SN^\mu.
$$

Hence, there is no $m$-measurable subset $C$ of X such that
 $C\cap E =\bigcup_{\ga\in\Theta}A_\ga$. This ends the proof in the `measure case'.

\medskip

2. \textit{The Baire case}.

 As above, we denote the space in the statement of Theorem \ref{th:Prikry2} by $X$. Let $Q$   be a countable dense subset of $X$ and set $E_1=E\cup Q$. As $E_1$ is dense in $X$, by \cite[\S10.\,IV, Theorem 2]{Kur66}, the sets $A_\ga$ are 1st category relative to $E_1$. By Proposition~\ref{prop:Prikry1} with $Y=E_1$, we get the existence of $\Theta\subset\Gamma$ such that $A=\bigcup_{\ga\in\Theta} A_\ga$ does not have the Baire property relative to $E_1$.
 Suppose that there exists $C$ with the Baire property relative to $X$ such that $C\cap E =A$. Then  $C\cap E_1=A\cup Q_2$, where $Q_2\subset Q$. As $E_1$ is dense in $X$, by \cite[\S11.V, Theorem 2]{Kur66}, $C\cap E_1$ has the Baire property relative to $E_1$, and so $A\cup Q_{2}$ has it. It follows that $A$  has the Baire property relative to $E_1$, in contradiction with the choice of $A$. This ends the próoof in the `Baire case'.

\begin{Rem} One can easily check that, by essentially repeating their proofs, also Theorems~\ref{th:BR} and \ref{th:universal} admit the following strengthening to the point-finite case.
\medskip

\textit{Let $Y$ be a 2nd countable topological (resp.,\,Polish) space. Let $\{A_\ga:\ga\in\Gamma\}$ be a point-finite family of  universally $\SM$-null (resp.,\,always 1st category) subsets of $Y$. If  the union $E=\bigcup_{\ga\in\Gamma}A_\ga$
is  not an $\SM$-null (resp.,\,always 1st category)  subset of $Y$, then there exists $\Theta\subset\Gamma$ such that there is no universally $\SM$-measurable (resp.,\,having  the Baire property in the restricted sense) subset $C$ such that   $C\cap E=\bigcup_{\ga\in\Theta}A_\ga$.}

\end{Rem}

\medskip

\section{Appendix}

\S1. Kunugi's statement in his proof (Section 4), in which he refers to a \textit{result} of Montgomery, is slightly misleading. What he really means is that by applying a similar \textit{method of proof} as the one used by Montgomery in \cite{Mon35}, one can get

\begin{Prop} Let $Y$ be a metric space and $\ep>0$.  There exist  sets $Y^n(\ep), n\in\BN$, such that $Y=\bigcup_n Y^n(\ep)$ and $Y^n(\ep)$ decomposes into a transfinite sequence $ Y^n_\xi(\ep)$ of $G_\rho$-sets in such a way that $d(Y^n_\xi(\ep),Y^n_{\xi'}(\ep))>\frac1n$ for each $\xi\neq\xi'$ and $d(Y^n_\xi(\ep)<\ep$ for each $\xi$.
\end{Prop}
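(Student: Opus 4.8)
The plan is to recognize the assertion as a partition version of A.\,H.\,Stone's construction proving paracompactness of metric spaces, which is exactly the ``method of proof'' of Montgomery \cite{Mon35} to which the main text alludes. First I would fix $\ep>0$ and well-order the open cover $\SU=\{U_\alpha:\alpha<\lambda\}$ of $Y$ by all open balls of radius $\ep/3$; every $U_\alpha$ then has diameter $<\ep$. Writing $r_n=1/n$, I would define by induction on $n$ the open sets $V_{\alpha,n}$: call a point $x$ a \emph{level-$n$ center for $\alpha$} when (i) $\alpha$ is the least index with $x\in U_\alpha$, (ii) $B(x,3r_n)\sbs U_\alpha$, and (iii) $x\notin V_{\beta,m}$ for every $m<n$ and every $\beta$; then set $V_{\alpha,n}=\bigcup\{B(x,r_n):x\text{ a level-}n\text{ center for }\alpha\}$. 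With the open sets $W_n=\bigcup_\alpha V_{\alpha,n}$ and $W_{<n}=\bigcup_{m<n}W_m$, the pieces of the desired decomposition are $Y^n_\alpha(\ep)=V_{\alpha,n}\sminus W_{<n}$ and $Y^n(\ep)=W_n\sminus W_{<n}$.

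Then I would check the four requirements in turn. The $G_\rho$-property is immediate, since $Y^n_\alpha(\ep)$ is the difference $V_{\alpha,n}\sminus W_{<n}$ of two open sets. The diameter bound follows from the containment $Y^n_\alpha(\ep)\sbs V_{\alpha,n}\sbs U_\alpha$, whose diameter is $<\ep$. The heart of the argument is the separation estimate: if $\alpha\neq\alpha'$, say $\alpha<\alpha'$, and $z\in V_{\alpha,n}$, $z'\in V_{\alpha',n}$, take centers $x,x'$ with $z\in B(x,r_n)$, $z'\in B(x',r_n)$; since $x'$ lies in no $U_\gamma$ with $\gamma<\alpha'$ it misses $U_\alpha\supseteq B(x,3r_n)$, whence $d(x,x')\geq 3r_n$ and so $d(z,z')>3r_n-2r_n=r_n=1/n$. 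This yields both the disjointness of the $V_{\alpha,n}$ at a fixed level and the required bound $d(Y^n_\xi(\ep),Y^n_{\xi'}(\ep))>1/n$. For the covering I would show that each $x\in Y$ is caught at the first level where it qualifies: letting $\alpha_0$ be least with $x\in U_{\alpha_0}$ and $n_0$ large enough that $B(x,3r_{n_0})\sbs U_{\alpha_0}$, either $x\in W_{<n_0}$ (so $x$ already sits in some earlier $Y^m_\beta(\ep)$, for the least such $m$) or $x$ satisfies (i)--(iii) at level $n_0$ and hence $x\in V_{\alpha_0,n_0}\sminus W_{<n_0}=Y^{n_0}_{\alpha_0}(\ep)$. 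Disjointness across levels is clear because $Y^{n'}(\ep)$ avoids $W_n$ for $n<n'$, so the nonempty $Y^n_\xi(\ep)$ genuinely partition $Y$ into a transfinite sequence at each level $n$.

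The main obstacle, and the only place where care is really needed, is the separation computation, where the threefold radius in condition (ii) is exactly what forces centers attached to distinct indices to stand at least $3r_n$ apart, so that the radius-$r_n$ balls around them remain $>1/n$ apart after re-absorbing the two radii. A secondary point worth flagging is that for small $n$ the radius $r_n=1/n$ may be too large for condition (ii) ever to hold, so that $Y^n(\ep)=\emptyset$; this is harmless, since the covering argument needs only that each point be absorbed at some (necessarily large) level $n$, while the separation and diameter requirements hold vacuously for empty pieces. After discarding the empty pieces and reindexing, one obtains precisely the decomposition claimed, which is the statement Kunugi attributed to Montgomery \cite{Mon35}.
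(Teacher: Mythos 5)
Your proof is correct, but it follows a genuinely different route from the paper's. You run A.\,H.\,Stone's paracompactness construction: an inductively defined system of centers, the $3r_n$-versus-$r_n$ ball trick for separation, and the subtraction of earlier levels $W_{<n}$ to secure the covering; all the verifications you give (separation, diameter, covering, $G_\rho$) go through as stated. The paper's own proof is substantially lighter: it well-orders a base $\{B_\ga\}_{\ga\in\Gamma}$ of open sets of diameter $<\ep$, forms the first-entry differences $H_\ga=B_\ga\sminus\bigcup_{\eta<\ga}B_\eta$, and sets $Y^n_\ga(\ep)=\{x\in H_\ga: d(x,Y\sminus B_\ga)>\tfrac1n\}$. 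Separation is then one line: if $\ga<\ga'$, every point of $Y^n_{\ga'}(\ep)$ lies in $H_{\ga'}$ and hence outside $B_\ga$, while every point of $Y^n_\ga(\ep)$ is at distance $>\tfrac1n$ from $Y\sminus B_\ga$; covering is equally quick, since any $x\in H_\ga$ has $d(x,Y\sminus B_\ga)>0$ and so lands in $Y^n_\ga(\ep)$ for $n$ large. Thus the paper needs no induction on $n$, the pieces are given by a closed formula, and the levels are nested ($Y^n_\ga(\ep)\sbs Y^{n+1}_\ga(\ep)$) rather than disjoint. Your version costs more bookkeeping but buys something the paper's does not: the levels $Y^n(\ep)$ are pairwise disjoint, so the nonempty pieces genuinely partition $Y$, and the argument connects the statement to the standard paracompactness machinery (note, though, that Stone's construction postdates Montgomery's 1935 paper, so the paper's distance-stratification argument is arguably closer to what Kunugi had in mind). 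One cosmetic point common to both proofs: each establishes $d(z,z')>\tfrac1n$ for every pair of points in distinct pieces, which for the infimum defining the distance between the sets yields only $d(Y^n_\xi(\ep),Y^n_{\xi'}(\ep))\geqs\tfrac1n$; this discrepancy is immaterial for Kunugi's application.
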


\begin{proof} Let $\{B_\ga:\ga\in\Gamma\}$ be a base of open sets in $Y$ consisting of sets whose diameter $d(B_\ga)<\ep$. Define $H_\ga=B_\ga\sminus \bigcup_{\eta<\ga}B_\eta$ and set
$$
Y^n_\ga(\ep)=\{x\in H_\ga: d(x,Y\sminus B_\ga)>\frac1n\}=H_\ga\cap\{x\in B_\ga: d(x, Y\sminus B_\ga\}>\frac1n\}.
$$
As the latter set is an intersection of a $G_\rho$-set and an open set, $Y^n_\ga(\ep)$ is a $G_\rho$-set.

Some of these sets $Y^n_\ga(\ep)$ may be empty. Removing all the empty sets, we will get a new index set. Let us denote its indices by $\xi$. It is easily seen that, for each $\xi\neq\xi'$, $d(Y^n_\xi(\ep), Y^n_{\xi'}(\ep))>\frac1n$. As $Y^n_\xi(\ep)\subset H_\xi\subset B_\xi$, its diameter is less than $\ep$. If $x\in Y$, then $x\in H_\xi$ for some $\xi$. As $x\in B_\xi$, an open set,  $d(x,Y\sminus B_\xi)>0$. Choose $n_0$ so that $\frac1{n_0}<d(x,Y\sminus B_\xi)$. Then $x\in Y^{n_0}_\xi(\ep)\subset Y^{n_0}(\ep)$. Hence $Y=\bigcup_n Y^n(\ep)$.
\end{proof}

\S2. After the  proof of the existence of two subsets of $E$ that cannot be separated by measurable sets, Lusin gave the following conclusion without any further explanation:

\medskip

\textit{In the end, we have two disjoint subsets $E_1$ and $E_2 $ of $E$ contained in a perfect set $Q$ of positive measure such that each one of them has outer measure equal to the measure of $Q$.}

\medskip
\begin{Rem}  For a stronger result of this type, in whose proof the above conclusion  is used, see \cite[Th\'eor\`eme II]{Sie34a}.
See also \cite{Grze17}.
\end{Rem}

Let us show that Lusin's statement is indeed true (in our more general setting, we will  have to assume that the measure vanishes on points).
We need the following
\begin{Lem}\label{lem:hull} Let $Q$ be a measurable  set contained in an envelope of $A$. Then $Q$ is an envelope of $Q\cap A$.
\end{Lem}

\begin{proof} Denote by $\tilde A$ an envelope of $A$.
Let $G$ be a measurable set contained in $Q\sminus (Q\cap A)$. Then $G\subset\tilde A\sminus A$ and so, by Lemma~\ref{lem:meshull}, $m(G)=0$. The same Lemma implies that $Q$ is an  envelope of $Q\cap A$.
\end{proof}

Now, let $A$ and $ B$  be two disjoint subsets of $E$ that cannot be separated by a measurable set.   Let $S=\tilde A\cap\tilde B$. Then $S$ is of positive measure, because $\tilde A\cap B$ already is, as otherwise $A$ and $B$ could be separated. 
At this point, we assume additionally that ($Y$ is $T_1$ and)  $m$  \textit{vanishes on points}. As $m(S)>0$, we can find a closed subset of $S$ of positive measure  by regularity of $m$ and remove  points that are not its condensation points (cf.\,\cite[1.7.11]{Eng89}) to get a perfect set $Q$. Then, by Lemma~\ref{lem:hull}, $m(Q)=m_e(Q\cap F_1)=m_e(Q\cap F_2)$.
Set $E_1=Q\cap F_1$ and $E_2=Q\cap F_2$.

\end{document}